\newtheorem{theorem}{Theorem}
\newtheorem{lemma}{Lemma}
\newcommand{\T}{\mathcal{T}}
\newcommand{\C}{\mathcal{C}}
\newcommand{\A}{\mathcal{A}}
\newcommand{\F}{\mathcal{F}}
\newcommand{\Z}{\mathbb{Z}}
\newcommand{\E}{\mathbb E}
\newcommand{\N}{\mathbb N}
\title{Logical convergence laws via stochastic approximation and Markov processes} 
\author{Yury Malyshkin\footnote{Tver State University, Russia; yury.malyshkin@mail.ru}, Maksim Zhukovskii\footnote{The University of Sheffield, UK; m.zhukovskii@sheffield.ac.uk}}
\date{}
\begin{document}

\maketitle

\begin{abstract}
Since the paper of Kleinberg and Kleinberg, SODA'05, where it was proven that the preferential attachment random graph with degeneracy at least 3 does not obey the first order 0-1 law, no general methods were developed to study logical limit laws for recursive random graph models with arbitrary degeneracy. Even in the (possibly) simplest case of the uniform attachment, it is still not known whether the first order convergence law holds in this model. We prove that the uniform attachment random graph with bounded degrees obeys the first order convergence law. To prove the law, we describe dynamics of first order equivalence classes of the random graph using Markov chains. The convergence law follows from the existence of a limit distribution of the considered Markov chain. To show the latter convergence, we use stochastic approximation processes.
\end{abstract}

\section{Introduction}
\label{sec:Intro}

We consider first-order (FO) sentences about graphs in the language containing the adjacency $\sim$ and the equality $=$ relations.
For example, the sentence 
$$
\forall x\forall y \, (x=y)\vee (x\sim y) \vee (\exists z \, (z\sim x) \vee (z\sim y))
$$
describes the property of having diameter at most 2. 

A {\it random graph} $G_n$ on the vertex set $[n]:=\{1,\ldots,n\}$ is a random element of the set of all (simple) graphs on $[n]$. It was proven by Glebskii, Kogan, Liogon'kii and Talanov~\cite{GKLT69} and independently by Fagin~\cite{F76} that, for every FO sentence $\phi$ either asymptotically almost all graphs on $[n]$ satisfy $\phi$, or asymptotically almost all graphs on $[n]$ do not satisfy $\phi$. In other words, letting $G_n$ be uniformly distributed, we get that either $\mathbb{P}(G_n\models\phi)\to 1$, or $\mathbb{P}(G_n\models\phi)\to 0$ as $n\to\infty$. Roughly speaking, the descriptive power of FO logic is weak in the sense that it does not express properties that are not trivial on typical large enough graphs. This phenomena is known as the {\it zero-one law}. More generally, a sequence of random graphs $G_n$, $n\in\mathbb{N}$, {\it obeys the FO 0-1 law}, if, for any FO sentence $\phi$, $\lim_{n\to\infty}\mathbb{P}(G_n\models\phi)\in\{0,1\}$. The comparison of descriptive powers of logics via the validity of limit laws for these logics appears to be helpful, e.g., for the induced subgraph isomorphism problem. It can be used to prove that the minimum quantifier depth $q$ of a sentence that expresses the property of containing an induced subgraph isomorphic to a given graph $F$ is at least $\frac{|E(F)|}{|V(F)|}+2$~\cite{VZ}, while the truth value of a FO sentence of quantifier depth $q$ can be determined on an $n$-vertex graph in time $O(n^q)$, see~\cite[Proposition 6.6]{Libkin}. 

The most studied model in the context of FO 0-1 laws is the {\it binomial random graph} $G(n,p)$ (see, e.g., \cite{Janson,Survey,Spencer_Ehren}), where every edge is drawn independently with probability $p$. In particular $G(n,1/2)$ is just a graph chosen uniformly at random. The above mentioned classical 0-1 law (for $p=1/2$) is generalised to all $p=p(n)$ such that $\min\{p,1-p\}n^{\alpha}\to\infty$ for every $\alpha>0$, see~\cite{Spencer_Ehren} (in particular, this is true for all constant $p\in(0,1)$).  On the other hand, the FO 0-1 law fails for $G(n,p=n^{-\alpha})$, where $\alpha\in(0,1)$ is rational \cite{Shelah}. Moreover, even the FO convergence law fails for this random graph (a sequence of random graphs $G_n$, $n\in\mathbb{N}$, obeys {\it the FO convergence law}, if, for every FO sentence $\phi$, $\lim_{n\to\infty}\mathbb{P}(G_n\models\phi)$ exists). Many other models are studied in the context of logical laws: random regular graphs \cite{Haber}, random geometric graphs \cite{M99}, uniform random trees \cite{McColm}, etc. (see, e.g., \cite{Muller,Strange,Zhuk_Svesh,Winkler}). However, the usual combinatorial tools that are applied to prove logical laws seem to be insufficient to study the logical behaviour of attachment models that are, in particular, used to model real networks (see, e.g., \cite{Hof16}). 

In~\cite{Kleinberg} Kleinberg and Kleinberg observed that the classical Bollob\'{a}s--Riordan preferential attachment random graph with degeneracy at least 3 does not obey the FO 0-1 law.  Since that, there was no significant progress in the study of logical limit laws for attachment models. In particular, it is still unknown whether the classical preferential attachment random graph obeys the FO convergence law. Though the study of random graphs is dominated by the binomial random graph (and a similar uniform model), properties of preferential attachment models better resemble those of real-world networks such as the graph of the Web, social networks, and citation networks. Let us recall that the preferential attachment graphs were introduced by Barab\'{a}si and Albert~\cite{BA_PA} and later were formalised by Bollob\'{a}s and Riordan~\cite{BR_PA}. Nevertheless, for the sake of simplicity of computations, in this paper we study a close but simpler model --- uniform attachment~\cite{FP-GPR,JKMS}.\\

The attachment models are built recursively, on each step one new vertex is added to the graph, from which $m$ new edges are drawn randomly to the old vertices. The most studied attachment models are uniform and preferential attachment.  In the uniform attachment model probabilities to draw an edge to a vertex are the same for all vertices, while in the preferential attachment model probabilities are proportional to the degrees of the respective vertices. In the context of FO limit laws, the following is known:
\begin{itemize}
\item the FO 0-1 law holds for the tree models (when only one edge is drawn at each step, i.e. $m=1$ \cite{MZ21}), for both preferential and uniform attachment,
\item for the non-tree uniform model (when we draw $m\geq 2$ edges at each step) and the preferential attachment model with the degeneracy at least 3 ($m\geq 3$ edges are drawn at each step) there is no 0-1 law~\cite{Kleinberg,MZ21},
\item the $\mathrm{FO}^{m-2}$ convergence law is known to be true for the uniform attachment~\cite{MZ22} ($\mathrm{FO}^{\gamma}$ is the fragment of FO logic comprising all sentences with at most $\gamma$ variables),  the $\mathrm{FO}^{m-3}$ convergence law holds true for some variations of the preferential attachment \cite{M22}.
\end{itemize}
Thus, for the entire FO logic, we only know that the FO 0-1 law fails if $m$ is large enough ($m\geq 2$ for the uniform attachment and $m\geq 3$ for the preferential attachment), while it is still unclear whether the FO convergence law fails at least for some $m$. Constructions of sentences with non-trivial limit probabilities are quite straightforward: since, for $m\geq 3$, the expected number of cliques of size $m+1$ converges to a finite limit, a sentence saying that there exist at least $K$ cliques of size $m+1$ is bounded away both from 0 and 1, for $K$ large enough (see details in~\cite{Kleinberg,MZ21}). Though for the existential fragment of the FO logic, the convergence law clearly holds for any attachment model (it immediately follows from the definition of the model), no approach to study the validity of the convergence law for the entire FO logic has been developed. In this paper, we develop a method to prove FO convergence laws for attachment models, and apply it to the uniform attachment with bounded degrees.\\


The main tool for proving logical laws is {\it the Ehrenfeucht-Fra\"{\i}ss\'{e} game} (see, e.g., \cite[Chapter 11.2]{Libkin}). Let us recall the rules of the game. The board consists of two vertex--disjoint graphs $G$ and $H$. There are two players, {\it Spoiler} and {\it Duplicator}. In each round, Spoiler chooses a vertex either in $G$, or in $H$; then Duplicator chooses a vertex in the remaining graph. Duplicator's objective is to show that the graphs are similar by establishing a partial isomorphism between $G$ and $H$ determined by the chosen vertices. When Duplicator fails, she immediately looses. The Ehrenfeucht-Fra\"{\i}ss\'{e} game provides a connection between the existence of the winning strategy of Duplicator in the game in $R$ rounds on two graphs and their indistinguishability in terms of FO sentences with quantifier depth at most $R$. This connection could be formulated in the following way. Let us recall that {\it the quantifier depth} of a FO sentence is, roughly, the maximum number of nested quantifiers in this sentence (for a formal definition, see \cite[Definition 3.8]{Libkin}).
\begin{theorem}
\label{thm:Ehren}
Duplicator wins the game on graphs $G$ and $H$ in $R$ rounds if and only if, for every FO sentence $\phi$ with quantifier depth at most $R$, either $\phi$ is true on both $G$ and $H$ or it is false on both graphs.
\end{theorem}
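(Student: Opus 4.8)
The plan is to prove, by induction on the number of rounds $R$, a relativized statement from which the theorem is the case of empty starting positions. Given tuples $\bar a=(a_1,\ldots,a_k)$ of vertices of $G$ and $\bar b=(b_1,\ldots,b_k)$ of vertices of $H$, write $(G,\bar a)\equiv_R(H,\bar b)$ if $G\models\varphi(\bar a)\Leftrightarrow H\models\varphi(\bar b)$ for every FO formula $\varphi(x_1,\ldots,x_k)$ of quantifier depth at most $R$. The statement to prove is that Duplicator wins the $R$-round game from the position in which the pebbles already occupy $\bar a$ and $\bar b$ if and only if $(G,\bar a)\equiv_R(H,\bar b)$; taking $k=0$ recovers Theorem~\ref{thm:Ehren}. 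I would first record the base case $R=0$: here no moves remain, so Duplicator has already won exactly when the correspondence $a_i\mapsto b_i$ is a partial isomorphism, i.e.\ when $\bar a$ and $\bar b$ satisfy the same atomic formulas $x_i=x_j$ and $x_i\sim x_j$. Since every quantifier-free formula is a Boolean combination of atomic ones, this is precisely $(G,\bar a)\equiv_0(H,\bar b)$.

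For the inductive step I assume the equivalence for $R-1$ and all positions, and treat the two implications separately. For the direction from a winning strategy to logical equivalence, I would put every depth-$R$ formula into the standard normal form as a Boolean combination of atomic formulas and formulas $\exists y\,\psi(\bar x,y)$ with $\psi$ of depth at most $R-1$ (the case of $\forall$ being absorbed by closure under negation). If $G\models\exists y\,\psi(\bar a,y)$ with witness $a$, I let Spoiler play $a$ in $G$; Duplicator's winning strategy returns some $b$ in $H$ and still wins the remaining $(R-1)$-round game from the position with pebbles on $(\bar a,a)$ and $(\bar b,b)$, so the induction hypothesis gives $(G,\bar a,a)\equiv_{R-1}(H,\bar b,b)$ and hence $H\models\psi(\bar b,b)$, that is, $H\models\exists y\,\psi(\bar b,y)$. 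The symmetric move on $H$ together with closure under Boolean combinations finishes this direction.

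For the converse direction, from logical equivalence to a winning strategy, the idea is to have Duplicator answer so as to preserve $\equiv_{R-1}$ after one round, after which the induction hypothesis supplies a strategy for the rest. Suppose Spoiler plays $a$ in $G$ (a move in $H$ is symmetric, since $\equiv_R$ is a symmetric relation). I would associate with $(G,\bar a,a)$ its \emph{characteristic} (Hintikka) formula $\chi(\bar x,y)$ of depth $R-1$, namely the conjunction of all depth-$\le R-1$ formulas satisfied by $(\bar a,a)$; then $G\models\exists y\,\chi(\bar a,y)$, a formula of depth $R$, so by $(G,\bar a)\equiv_R(H,\bar b)$ there is a witness $b$ in $H$ with $H\models\chi(\bar b,b)$. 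By the construction of $\chi$ this means exactly $(G,\bar a,a)\equiv_{R-1}(H,\bar b,b)$, and Duplicator answers $a$ with $b$.

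The main obstacle is the very existence of the characteristic formula $\chi$, which a priori is an infinite conjunction. What makes it a legitimate FO formula is that, over the finite relational vocabulary $\{=,\sim\}$ and with a fixed finite set of free variables, there are only finitely many FO formulas of quantifier depth at most $R-1$ up to logical equivalence. I would establish this finiteness by a separate induction on $R-1$: the depth-$0$ formulas are Boolean combinations of the finitely many atomic formulas, and the inductive step only adds existential quantification over representatives of the finitely many inequivalent lower-depth formulas. Taking $\chi$ to be the finite conjunction of representatives of the classes holding at $(\bar a,a)$ then makes the argument above rigorous. This finiteness lemma is the technical heart of the proof and the only place where the combinatorial structure of the logic, rather than the game itself, is genuinely used.
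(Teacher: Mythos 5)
Your proposal is correct, and it is the standard proof of the Ehrenfeucht--Fra\"{\i}ss\'{e} theorem: induction on $R$ over the relativized statement for pebbled positions, with the finiteness (up to logical equivalence) of formulas of bounded quantifier depth in a fixed finite set of free variables supplying the Hintikka formula needed for the back-and-forth step. Note that the paper itself gives no proof of Theorem~\ref{thm:Ehren} --- it is quoted as a classical result with a pointer to \cite{Libkin}, Chapter~11.2 --- and your argument is essentially the one found in that reference, so there is nothing to compare beyond observing that your write-up correctly identifies the finiteness lemma as the one genuinely technical ingredient.
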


The direct consequence of Theorem~\ref{thm:Ehren} is that if for every $\varepsilon>0$ and $R\in\mathbb{N}$ there exist graph families $\A_i$, $i\in[M]$ ($M$ does not depend on $n$), such that, for any two representatives of one family, Duplicator wins the game in $R$ rounds (which is equivalent to indistinguishability in FO logic with quantifier depth at most $R$) and
$$\mathbb{P} \left(G_n\in\A_i\right)\to p_i,\,\, i\in [M],\quad \sum_{i=1}^{M}p_i>1-\varepsilon,$$
then $G_n$ satisfies the FO convergence law. 

Let us now describe the main difficulty in the application of Theorem~\ref{thm:Ehren} to proving convergence laws for uniform attachment random graphs. Assume that the two players play the $R$-rounds game on two large uniform attachment random graphs $G_1\subset G_2$ on $[n_1]$ and $[n_2]$ respectively. Let $r$ be large enough (depending on $R$). For an induced subgraph $F$ of a graph $G$, we call the induced subgraph of $G$ containing all vertices that are at distance at most $r$ from some vertex of $F$ {\it the $r$-neighbourhood of $F$}. In particular, the induced subgraph spanned by all vertices that are at distance at most $r$ from a given vertex $v$ is {\it the $r$-neighbourhood of $v$}. It can be shown that there exists $n_0$ such that very likely $G'_1:=G_1\backslash[n_0]$ and $G'_2:=G_2\backslash[n_0]$ are locally almost trees --- every vertex has the $r$-neighbourhood containing at most one cycle (we call a connected graph with exactly one cycle {\it unicyclic}). Moreover, for every admissible rooted tree $T$ of depth $r$, there are many vertices such that their $r$-neighbourhoods are isomorphic to $T$ in both graphs $G'_1,G'_2$. Then, for Duplicator to win it is enough to guarantee that, for each {\it $\mathrm{FO}_R$-equivalence class} $\C$ (two graphs are {\it $\mathrm{FO}_R$-equivalent} if they are not distinguishable by a sentence in the considered fragment of FO logic --- with quantifier depth at most $R$) 
and for every $a\in[3,r]$, the numbers of $r$-neighbourhoods of $a$-cycles $C_a$ that have isomorphic representatives in $\mathcal{C}$ are either equal in $G'_1,G'_2$ or large in both graphs. On the one hand, it is not difficult to give a structural description of logical equivalence classes of unicyclic graphs. On the other hand, there are equivalence classes with infinitely many admissible unicyclic graphs, and this makes the analysis of dynamics of the distribution of numbers of $r$-neighbourhoods that belong to a given class hard. However, if we bound the degrees of $G_n$, then this is no longer the case --- the number of representatives in each of the equivalence classes becomes bounded as well. We shall finally note that subtrees ``growing from the kernel $[n_0]$'' in $G_1,G_2$ also require a special treatment, and thus, having no restrictions on degrees, we face an obstacle similar to those as we overviewed for unicyclic subgraphs that do not overlap with $[n_0]$.\\

Let us introduce the model of graphs $G_n=G_n(m,d)$ that we consider in the paper. We start with a complete graph $G_m$ on $m$ vertices. Then on each step, we construct a graph $G_n$ by adding to $G_{n-1}$ a new vertex and drawing $m$ edges from it to different vertices, chosen uniformly among vertices with degrees less than $d$. Note that for such a procedure to be possible, we need the condition $d\geq 2m$. The case $d=2m$ is easier since in this case all but a constant number of vertices have degree $d$. It has been already considered separately in~\cite{Marxiv} and requires a different approach that cannot be generalised to other $d$. 

Note that $G_n$, for $m\geq 2$, still does not obey the FO 0-1 law --- the reason is the same as for the original uniform attachment model, see \cite{MZ21}. Indeed, if we consider the number of {\it diamond graphs} (for $m=2$) or the number of complete graphs on $m+1$ vertices (for $m\geq 3$), it could be proven (similar to the way it was done in Section~2 of \cite{MZ21}, but with modifications based on the arguments that appear in Sections~\ref{sec:4},~\ref{sec:5} of the present paper) that the probability to have a certain amount of such graphs is bounded away from both $0$ and $1$.

Let us formulate our main result.
\begin{theorem}
\label{th:main}
For every $m\geq 2$ and $d>2m$, $G_n(m,d)$ obeys the $\mathrm{FO}$ convergence law.
\end{theorem}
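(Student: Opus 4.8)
The plan is to follow the strategy sketched in the introduction: reduce the convergence law to a statement about the limiting distribution of certain neighborhood-counting statistics, using the Ehrenfeucht--Fra\"{\i}ss\'{e} machinery of Theorem~\ref{thm:Ehren}. Fix an FO sentence $\phi$ of quantifier depth $R$; it suffices to show $\lim_{n\to\infty}\Pr(G_n\models\phi)$ exists. By the consequence of Theorem~\ref{thm:Ehren} stated after it, this reduces to partitioning graphs into finitely many families $\A_i$, $i\in[M]$, indistinguishable by Duplicator in $R$ rounds, with $\Pr(G_n\in\A_i)$ converging and the total mass $\to 1$.

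First I would make the local-tree-likeness precise. I would fix a radius $r=r(R)$ (large enough so that the $R$-round game is controlled by $r$-neighborhoods) and show that with probability tending to $1$, after deleting a constant-sized initial segment $[n_0]$, every vertex of $G_n\setminus[n_0]$ has a unicyclic $r$-neighborhood. The bounded-degree hypothesis $d>2m$ is what makes this tractable: since degrees are capped at $d$, every $r$-ball has bounded size, so there are only \emph{finitely many} isomorphism types of rooted $r$-neighborhoods, and hence finitely many logical equivalence classes $\C$ of the relevant unicyclic pieces. This is precisely the point where the unbounded-degree case fails and where the whole argument hinges: finiteness of the type space turns the combinatorial description into a finite-dimensional tracking problem.

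The core step is to set up the Markov chain. I would let the state vector record, for each of the finitely many $r$-neighborhood isomorphism types (equivalently, each equivalence class $\C$), the number of vertices in $G_n$ currently realizing that type. As each new vertex is attached --- drawing $m$ edges uniformly to vertices of current degree less than $d$ --- the counts update according to transition rules determined by which types absorb the new edges. The key observation is that, after normalizing by $n$, these counts evolve as a \textbf{stochastic approximation} process: the expected increment of the fraction of vertices of each type, conditioned on the current configuration, is a smooth function of the current fractions (the attachment probabilities depend only on the proportions of available low-degree vertices of each type). I would then invoke the standard ODE method / stochastic approximation convergence theorem to show these fractions converge almost surely to the unique stable equilibrium of the associated mean-field ODE, establishing that for each class $\C$ the count of $r$-neighborhoods is, with high probability, either a fixed finite value or grows linearly in $n$ (hence is large) --- exactly the dichotomy Duplicator needs.

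The hard part, and the place where most of the technical work lives, will be verifying the hypotheses of the stochastic approximation theorem for this specific chain: namely, identifying the drift vector field, proving that the associated ODE has a unique globally attracting fixed point (or at least a well-understood limit set), and controlling the degree-saturation effect whereby vertices leave the "available" pool once their degree reaches $d$. Because attachment is restricted to low-degree vertices, the transition probabilities are genuinely state-dependent and the chain is non-homogeneous, so care is needed to show the normalized process stays bounded away from the degenerate regime and that the martingale remainder terms vanish. Once the limiting proportions are in hand, I would feed the resulting dichotomy back through the Ehrenfeucht--Fra\"{\i}ss\'{e} reduction: for each FO sentence $\phi$, the probability $\Pr(G_n\models\phi)$ is determined up to $o(1)$ by which combination of the finitely many classes is present in the "large" regime, and since each such indicator converges in distribution, $\lim_{n\to\infty}\Pr(G_n\models\phi)$ exists, which is the convergence law of Theorem~\ref{th:main}.
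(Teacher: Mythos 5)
Your proposal matches the paper's architecture for the reduction (Ehrenfeucht--Fra\"{\i}ss\'{e} game, finitely many families $\mathcal{A}_i$, local unicyclicity outside a constant initial segment) and for the \emph{tree} statistics: the paper indeed tracks, via stochastic approximation, the $n$-normalized counts of vertices of each degree (Section~\ref{sec:3}) and of maximal rooted subtrees of each type (Section~\ref{sec:6}), proving a.s. convergence of the fractions to the unique root of the drift field. Up to that point you are reconstructing Sections~\ref{sec:2},~\ref{sec:3},~\ref{sec:6} correctly.

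The genuine gap is in your core claimed dichotomy: that for \emph{every} neighborhood type the count is, whp, ``either a fixed finite value or grows linearly in $n$,'' obtained by applying stochastic approximation to all types at once. This fails for the cycle-containing types, and they are the whole difficulty. The number of cycles of any fixed length is only $O(\ln n)$ a.s. (Lemma~\ref{lem:number_of_cycle_upper_bound}), so the $n$-normalized count of every unicyclic type converges to $0$, and the ODE method then yields no information about whether two graphs have equal or both-large numbers of such neighborhoods. Moreover, the counts of \emph{non-complete} unicyclic types (those still having a non-leaf vertex of degree less than $d$) do not converge a.s. to anything, deterministic or random: they are perpetually created and absorbed, and only their \emph{distribution} converges. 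Indeed, if your dichotomy held a.s., one would obtain the FO 0-1 law, which the paper explicitly notes is false for $m\geq 2$ (the number of diamonds, resp. copies of $K_{m+1}$, takes each small value with probability bounded away from $0$ and $1$). What the paper does instead, and what your proposal is missing, is Section~\ref{sec:8}: the vector $(N_{U_1}(n),\ldots,N_{U_K}(n))$ of counts of non-complete unicyclic types is approximated by an (unnormalized, integer-valued) Markov chain on $\mathbb{Z}_+^K$; passing to the embedded jump chain makes it time-homogeneous, and a positive-recurrence argument (coordinate by coordinate, using that creation rates are $O(1/n)$ while destruction rates are proportional to the current count) yields a nondegenerate limiting distribution. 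That convergence in distribution --- combined with the $O(\ln n)$ upper bound, the lower bound of Lemma~\ref{lem:cycles} guaranteeing many \emph{complete} unicyclic subgraphs, and the frozen initial segment of Lemma~\ref{lem:starting_degrees} --- is what feeds properties ${\sf Q1}$, ${\sf Q2}$ and the game Lemma~\ref{lem:game}; stochastic approximation alone cannot produce it.
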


To prove Theorem~\ref{th:main} we show the convergence of cardinalities of the above mentioned logical equivalence classes of $r$-neighbourhoods using an approximation by Markov chains and applying some results about the existence of the limit distribution of a Markov chain. To build such an approximation we use so-called stochastic approximation processes (see, e.g., \cite{Chen03,Pem07} for more details). To our knowledge, an application of approximation processes to prove logical limit laws is novel. We hope that it may be used to prove the FO convergence for the original uniform attachment model and for some other recursive models as well.\\

The structure of the paper is as follows. 
 In Section~\ref{sec:2} we provide the basic results for a stochastic approximation that we would use in Sections~\ref{sec:3} and~\ref{sec:6}. The winning strategy of Duplicator is described in Section~\ref{sec:9}. It is based on the graph structure induced by the constantly many initial vertices (that we describe in Section~\ref{sec:4}) and the distribution of cardinalities of above mentioned equivalence classes of unicyclic graphs which is studied in Section~\ref{sec:8}. For the latter, we need to investigate separately the numbers of rooted trees (Section~\ref{sec:6}) and cycles (see Section~\ref{sec:5} for upper bounds and Section~\ref{sec:7} for lower bounds). Finally, in Section~\ref{sec:3} we prove auxiliary results about the asymptotic behaviour of the number of vertices of a given degree.

 \section{Stochastic approximation processes}
\label{sec:2}

Let us consider an $r$-dimensional {\it stochastic approximation process} $Z(n)$ (see \cite{Chen03} for more details on stochastic approximation processes) with corresponding filtration $\F_n$ which is defined in the following way





\begin{equation}
Z(n+1)-Z(n)=\frac{1}{n+1}\left(F(Z(n))+E_{n+1}+R_{n+1}\right),
\end{equation}
where $E_n$, $R_n$ and the function $F$ satisfy the following conditions. There exists $U\subset\mathbb{R}^r$ such that $Z_n\in U$ for all $n$ almost surely (a.s. for brevity) and
\begin{itemize}
\item[${\sf A1}$]
The function $F:\mathbb{R}^r\to\mathbb{R}^r$ is continuous and bounded in some neighbourhood of $U$, has a unique root $\theta$ in $U$, such that in some neighbourhood (in the entire $\mathbb{R}^r$) of the root
$$F(x)=H(x-\theta)+O\left(|x-\theta|^{a}\right)$$
for some $a>1$, where the matrix $H$ is stable, i.e. the real parts of the eigenvalues of $H$ are strictly negative. The largest of them denoted by $-L$ satisfies $L>1/2$.

\item[${\sf A2}$] For any $\epsilon>0$
$$\sup_{|x-\theta|>\epsilon,x\in U}F^t(x)(x-\theta)<0.$$
Note that for condition ${\sf A2}$ to hold it is enough for the derivative matrix of $F(x)$ to exist and to be stable in $U$.

\item[${\sf A3}$] $E_n$ is a martingale difference with respect to $\F_n$ (recall that a process $E_n$ is a {\it martingale difference} process with respect to a filtration $\F_n$ if $\E(E_{n+1}|\F_n)=0$ for all $n$) and for some $\delta\in(0,1/2)$, $R_n=O(n^{\delta})$ a.s. (i.e. there exists a non-random constant $C$, such that $\limsup_{n\to\infty}\frac{|R_n|}{n^{\delta}}\leq C$ a.s.), and
$$\sum_{n=1}^{\infty}\frac{E_{n+1}}{n^{1-\delta}}<\infty\quad\text{a.s.}$$
Note that due to convergence theorems for martingale differences (see \cite[Appendix~B]{Chen03}) for the last condition to hold it is enough that $\sup_{n}\E(|E_{n+1}|^2|\F_n)<\infty$ a.s.
\end{itemize}
We need the following result (see the proof in \cite[Theorem~3.1.1]{Chen03}):
\begin{theorem}
Under the above conditions, $Z(n)\to\theta$ a.s. with the convergence rate
$$|Z(n)-\theta|=o(n^{-\delta})\quad\text{a.s.} \left(\text{i.e. } \frac{|Z(n)-\theta|}{n^{-\delta}}\to 0 \text{ a.s.}\right).$$
\label{th:law}
\end{theorem}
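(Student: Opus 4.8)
The plan is to follow the two-stage strategy standard for stochastic approximation: first establish the almost sure convergence $Z(n)\to\theta$ from the contraction condition ${\sf A2}$ together with the noise control ${\sf A3}$, and only afterwards upgrade this to the sharp rate $o(n^{-\delta})$ by linearising $F$ near $\theta$ as permitted by ${\sf A1}$.

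For the first stage I would use the Lyapunov function $V(x)=|x-\theta|^2$ and expand
$$V(Z(n+1))=V(Z(n))+\frac{2}{n+1}(Z(n)-\theta)^t\bigl(F(Z(n))+E_{n+1}+R_{n+1}\bigr)+\frac{1}{(n+1)^2}\bigl|F(Z(n))+E_{n+1}+R_{n+1}\bigr|^2.$$
Taking the conditional expectation given $\F_n$ and using that $E_{n+1}$ is a martingale difference, the leading drift is $\frac{2}{n+1}(Z(n)-\theta)^tF(Z(n))$, which is nonpositive by ${\sf A2}$ and strictly negative away from $\theta$; the remaining terms are $O\bigl((n+1)^{-2}\bigr)$ together with the conditional second moment of $E_{n+1}$ and the $R_{n+1}$ contribution, all to be controlled by ${\sf A3}$. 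The Robbins--Siegmund almost-supermartingale convergence theorem then shows that $V(Z(n))$ converges a.s. and that $\sum_n\frac{1}{n+1}\bigl(-(Z(n)-\theta)^tF(Z(n))\bigr)<\infty$; since $\sum_n\frac{1}{n+1}=\infty$ while ${\sf A2}$ bounds the summand away from $0$ whenever $|Z(n)-\theta|>\epsilon$, this pins the limit of $V(Z(n))$ at $0$, i.e. $Z(n)\to\theta$ a.s. (Alternatively one may run the ODE method, since ${\sf A1}$--${\sf A2}$ make $\theta$ the unique globally attracting equilibrium of $\dot z=F(z)$ in $U$.)

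For the second stage, put $y_n=Z(n)-\theta$ and invoke ${\sf A1}$ to rewrite the recursion in the linearised form
$$y_{n+1}=\Bigl(I+\tfrac{1}{n+1}H\Bigr)y_n+\frac{1}{n+1}\bigl(E_{n+1}+R_{n+1}+O(|y_n|^{a})\bigr).$$
Let $\Phi(n,k)=\prod_{j=k+1}^{n}\bigl(I+\tfrac1jH\bigr)$ be the fundamental matrix; stability of $H$ with largest eigenvalue real part $-L$ gives the decay $\|\Phi(n,k)\|=O\bigl((k/n)^{L}\bigr)$ up to subpolynomial factors. Expressing $y_n$ as the homogeneous term $\Phi(n,n_0)y_{n_0}$ plus the $\Phi$-convolutions of the three noise sources, I would estimate each separately. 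The nonlinear term is negligible by a short bootstrap once Stage~1 gives $y_n\to0$ and $a>1$. The martingale term is treated by summation by parts using $\sum_n E_{n+1}/n^{1-\delta}<\infty$, which is calibrated precisely so that its convolution against $\Phi$ is $o(n^{-\delta})$; here the hypothesis $L>1/2>\delta$ is what forces the kernel sums of the shape $n^{-L}\sum_k k^{L-\delta-1}$ to land at the target order $n^{-\delta}$ rather than at a larger power.

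I expect the main obstacle to be exactly this accumulated-noise estimate at the \emph{sharp} exponent. The contraction produced by $H$ and the noise-decay exponent $\delta$ interact delicately, and it is precisely the threshold $L>1/2$ that separates the regime where the fluctuations are damped to $o(n^{-\delta})$ from the one where they survive at a larger scale; carrying out the summation-by-parts bound together with the handling of the perturbation $R_{n+1}$ at this critical rate is the technical crux. As the complete argument under these hypotheses is given in \cite[Theorem~3.1.1]{Chen03}, I would reduce our process to that setting and appeal to it.
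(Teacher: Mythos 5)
Your proposal is correct and, in the end, takes the same route as the paper: the paper gives no proof of Theorem~\ref{th:law} at all, but simply cites \cite[Theorem~3.1.1]{Chen03}, which is exactly the reduction you fall back on in your final paragraph. Your two-stage sketch (Lyapunov/ODE argument for a.s.\ convergence, then linearisation via ${\sf A1}$ and product-matrix estimates with the threshold $L>1/2>\delta$ for the rate) is a faithful outline of the internals of that cited result, so there is nothing to reconcile between the two.
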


\section{Number of vertices of fixed degree}
\label{sec:3}

In our model at step $n+1$ the probability to draw an edge to a given vertex equals to 
\begin{equation}
\label{eq:add_vertex}
1-\frac{{n-N_d(n)-1\choose m}}{{n-N_d(n)\choose m}}=1-\frac{n-N_d(n)-m}{n-N_d(n)}=\frac{m}{n-N_d(n)},
\end{equation}
where $N_k(n)$ is the number of vertices with degree $k$ at time $n$ for $k\in[m,d]$. In order to use this formula, we study an asymptotical behaviour of $N_k(n)$. Let $X_k(n):=N_k(n)/n$, $m\leq k\leq d$.
Let us consider the equation
$$\left(\frac{m}{m+1-x}\right)^{d-m}=x.$$
Since  $d>2m$, it has a unique root $x=\rho_d$ in $(0,2m/d)$. Let us define
\begin{equation}
\label{eq:roots}
\rho_k:=\frac{(1-\rho_d)m^{k-m}}{(m+1-\rho_d)^{k-m+1}},\quad k=m,\ldots,d-1.
\end{equation}

\begin{lemma}
\label{lem:conv_rate}
$X_k(n)\to \rho_k$ with rate $|X_k(n)-\rho_k|=o(n^{-1/2+\delta})$ for any $\delta>0$ a.s.
\end{lemma}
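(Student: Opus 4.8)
\emph{Setting up the recursion.} The plan is to realize $X(n)=(X_m(n),\dots,X_d(n))$ as a stochastic approximation process and apply Theorem~\ref{th:law}. First I would record the one-step dynamics of the counts. When the $(n{+}1)$-st vertex is inserted it arrives with degree $m$ and sends its $m$ edges to distinct vertices of degree $<d$; writing $\xi_k$ for the number of endpoints that had degree $k$ at time $n$ (so $\sum_k\xi_k=m$), a chosen vertex of degree $k$ is promoted to degree $k{+}1$, whence $N_k(n{+}1)-N_k(n)=\xi_{k-1}-\xi_k+\1[k=m]$, with $\xi_d=0$ and $\xi_{m-1}$ supported on the at most $m$ initial vertices of degree $m-1$. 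By \eqref{eq:add_vertex} each eligible vertex is chosen with probability $m/(n-N_d(n))$, so $\E(\xi_k\mid\F_n)=mN_k(n)/(n-N_d(n))$. Using $X_k(n{+}1)-X_k(n)=\frac{1}{n+1}\big([N_k(n{+}1)-N_k(n)]-X_k(n)\big)$ then casts the process in the form of Section~\ref{sec:2}, with
\[ F_m(x)=1-\tfrac{mx_m}{1-x_d}-x_m,\qquad F_k(x)=\tfrac{m(x_{k-1}-x_k)}{1-x_d}-x_k\ (m<k<d),\qquad F_d(x)=\tfrac{mx_{d-1}}{1-x_d}-x_d, \]
where $E_{n+1}$ is the bounded martingale difference built from $\xi_k-\E(\xi_k\mid\F_n)$, and $R_{n+1}$ absorbs the degree-$(m{-}1)$ boundary contribution of $\xi_{m-1}$, which is $O(1/n)$ since $N_{m-1}(n)\le m$.

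\emph{Confinement and ${\sf A3}$.} A deterministic a priori bound confines the process: the total degree equals $2\big(\binom m2+m(n-m)\big)<2mn$, and each degree-$d$ vertex contributes $d$ to it, so $X_d(n)<2m/d<1$ for every $n$. Hence $X(n)$ stays in the compact set $U=\{x\ge 0:\ x_d\le 2m/d\}$, on which $1-x_d\ge 1-2m/d>0$ and $F$ is continuous and bounded; this supplies the set $U$ of conditions ${\sf A1}$–${\sf A3}$. For ${\sf A3}$, the increments $N_k(n{+}1)-N_k(n)$ are bounded by $2m+1$, so $\sup_n\E(|E_{n+1}|^2\mid\F_n)<\infty$ (which gives the required martingale-series condition) and $R_{n+1}=O(1/n)=O(n^{\delta})$.

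\emph{Root and linearization.} Solving $F(x)=0$ in $U$ gives $x_m=\frac{1-x_d}{m+1-x_d}$, the cascade $x_k=\frac{m}{m+1-x_d}x_{k-1}$ for $m<k<d$, and finally $\big(\frac{m}{m+1-x_d}\big)^{d-m}=x_d$; by hypothesis this scalar equation has the unique root $x_d=\rho_d\in(0,2m/d)$, and back-substitution reproduces exactly \eqref{eq:roots}, so $\theta=(\rho_m,\dots,\rho_d)$ is the unique root of $F$ in $U$. Differentiating, $H=DF(\theta)$ is lower bidiagonal (subdiagonal $\beta:=m/(1-\rho_d)$, diagonal $-(\beta+1)$ in rows $m,\dots,d-1$) plus a rank-one correction in its last column — including the modified entry $H_{dd}=\frac{2\rho_d-1}{1-\rho_d}$ — stemming from the $x_d$-dependence of the factor $1/(1-x_d)$. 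For ${\sf A2}$ I would check, via the note following it, that $DF(x)$ is stable on all of $U$.

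\emph{Main obstacle and conclusion.} The crux is the quantitative part of ${\sf A1}$: that every eigenvalue of $H$ has real part $<-\tfrac12$ (so $L>\tfrac12$). Here one eigenvalue is pinned down exactly, since $\sum_kF_k(x)=1-\sum_kx_k$ forces $\mathbf 1^tH=-\mathbf 1^t$, giving the eigenvalue $-1$ (the total mass relaxing at rate $1$) and leaving the remaining $d-m$ eigenvalues to the invariant subspace $\{\sum_jw_j=0\}$. To locate those I would solve $(H-\lambda I)w=0$: the bidiagonal rows express $w_m,\dots,w_{d-1}$ as explicit multiples of $w_d$, and the last row collapses to a single polynomial in $\lambda$ whose roots must be shown to lie in $\{\mathrm{Re}\,\lambda<-\tfrac12\}$. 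I expect the bidiagonal backbone to place them near $-(\beta+1)\le-2$, with the rank-one correction to be controlled so as not to push any real part above $-\tfrac12$; this is the delicate step, and it must use the actual spectrum rather than the symmetric part, because a crude numerical-range estimate fails when $H_{dd}=\frac{2\rho_d-1}{1-\rho_d}>0$, i.e.\ when $\rho_d>\tfrac12$. Once $L>\tfrac12$ is established, Theorem~\ref{th:law} applies: given $\delta>0$ as in the statement, running it with the ${\sf A3}$-exponent equal to $\tfrac12-\delta\in(0,\tfrac12)$ yields $|X_k(n)-\rho_k|=o(n^{-(1/2-\delta)})=o(n^{-1/2+\delta})$ a.s., as claimed.
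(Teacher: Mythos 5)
Your setup coincides with the paper's own proof: the same drift functions (your $F_k$ are the paper's $f_k$ in \eqref{eq:func}), the same root computation reproducing \eqref{eq:roots}, the same martingale difference $E_{n+1}$, and the same verification of ${\sf A3}$ (your slightly more careful bookkeeping of the initial degree-$(m-1)$ vertices through $R_{n+1}$ is fine and harmless). The problem is that you stop exactly at the mathematical core of the lemma. Conditions ${\sf A1}$ and ${\sf A2}$ require the linearization $H=DF(\theta)$ --- and in fact $DF(x)$ on the whole confinement set --- to be stable with top real part $-L<-1/2$, and at this point your text only records what you ``expect'': that the bidiagonal backbone should place the remaining $d-m$ eigenvalues near $-(\beta+1)$ and that the last-column correction ``must be controlled so as not to push any real part above $-\tfrac12$''. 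No such control is given; you yourself note that numerical-range bounds can fail when $H_{dd}>0$, and indeed for matrices of this shape non-negativity-type heuristics are genuinely insufficient (for degree $\geq 3$ a characteristic polynomial can have non-negative coefficients and still possess roots with positive real part, e.g.\ $t^3+1$). So this is a missing proof of the key estimate, not a routine omission: without $L>1/2$, Theorem~\ref{th:law} cannot be invoked and the lemma is unproved.

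The paper closes precisely this step by an explicit computation: it expands the characteristic polynomial of $DF(x)$ from \eqref{eq:func_der}, substitutes $t=1+\lambda$, $c=m/(1-x_d)$, and obtains the factorization $P(\lambda)=(-1)^{d-m+1}\,t\,Q(t)$ with
\[
Q(t)=(c+t)^{d-m}-\sum_{k=m}^{d-1}c^{d-k}(c+t)^{k-m}\frac{x_k}{1-x_d},
\]
then shows that the coefficients of $Q$ as a polynomial in $t$ are non-negative \emph{provided} $\sum_{k=m}^{d}x_k\leq 1$, and concludes that all eigenvalues other than $\lambda=-1$ have real part at most $-1$, so the top eigenvalue is exactly $-1$; this yields ${\sf A1}$ (with $L=1>1/2$) and ${\sf A2}$ simultaneously, on all of $U$. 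Two features are worth comparing against your sketch. First, the enabling constraint is the simplex relation: the paper takes $U=\{x\geq 0,\ x_m+\ldots+x_d=1,\ x_d\leq 2m/d\}$, which the process satisfies, whereas your $U=\{x\geq 0,\ x_d\leq 2m/d\}$ drops $\sum_k x_k=1$ --- and it is exactly this relation that makes the coefficient bound (hence the stability claim) work; on your larger set the claim is not established. Second, your identity $\mathbf 1^tH=-\mathbf 1^t$ is the differential form of the paper's observation $P(-1)=0$, and restricting to the invariant hyperplane $\{\sum_j w_j=0\}$ is the right first move, but it only isolates the eigenvalue $-1$: the bound on the other $d-m$ eigenvalues still has to be extracted, e.g.\ from the factorization above, and that is the part your proposal leaves open.
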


\begin{proof}

Let $\mathcal{F}_n$ be the filtration that corresponds to the graphs $G_n$.
We get
\begin{align*}
\E\left(N_m(n+1)-N_m(n)|\F_n\right) & =1-\frac{m}{n-N_d(n)}N_m(n),\\
\E\left(N_k(n+1)-N_k(n)|\F_n\right) & =\frac{m}{n-N_d(n)}\left(N_{k-1}(n)-N_k(n)\right),\quad k=m+1,\ldots,d-1,\\
\E\left(N_d(n+1)-N_d(n)|\F_n\right) & =\frac{m}{n-N_d(n)}N_{d-1}(n).
\end{align*}
Since  the total number of vertices of degree $d$ does not exceed $\frac{2mn}{d}$, if $d>2m$ we get that $X_{d}(n)\leq \frac{2m}{d}<1$. Note that for $X_k(n)$ we get
\begin{equation}
\E\left(X_{k}(n+1)-X_k(n)|\F_n\right)=\frac{1}{n+1}\left(\E\left(N_k(n+1)-N_k(n)|\F_n\right)-X_k(n)\right).
\end{equation}
Hence, if we define functions (on $[0,1)^{d-m}\times[0,\frac{2m}{d})$)
\begin{align}
\label{eq:func}
f_m(x_m,\ldots,x_d)&=1-\left(\frac{m}{1-x_d}+1\right)x_m,\notag\\
f_k(x_m,\ldots,x_d)&=\frac{m}{1-x_d}x_{k-1}-\left(\frac{m}{1-x_d}+1\right)x_k,\quad k=m+1,\ldots,d-1,\\
f_d(x_m,\ldots,x_d)&=\frac{m}{1-x_d}x_{d-1}-x_d,\notag
\end{align}
we would get that for all $k\in[m,d]$,
\begin{equation}
\E\left(X_{k}(n+1)-X_k(n)|\F_n\right)=\frac{1}{n+1}f_k\left(X_m(n),\ldots,X_d(n)\right).
\end{equation}
For the vector $Z(n):=(X_{m}(n),\ldots,X_d(n))$ we have the following representation
$$Z(n+1)-Z(n)=\frac{1}{n+1}\left(F(Z(n))+(n+1)(Z(n+1)-\E(Z(n+1)|\F_n))\right),$$
where $F(x_m,\ldots,x_d)=(f_m(x_m,\ldots,x_d),\ldots,f_d(x_m,\ldots,x_d))^t$. Set $$E_{n+1}=(n+1)(Z(n+1)-\E(Z(n+1)|\F_n)),\quad R_{n+1}=0.$$
Let us find nulls of the system $F(x_m,\ldots,x_d)=0$, i.e. the system
\begin{equation}
\left\{
\begin{array}{cccc}
\label{eq:system}
1-\left(\frac{m}{1-x_d}\right)x_m &=& x_m, & \\
\frac{m}{1-x_d}(x_{k-1}-x_k) &=& x_k,&\quad k=m+1,\ldots,d-1,\\
\frac{m}{1-x_d}x_{d-1} &=& x_d. &
\end{array}
\right.
\end{equation}
We get
\begin{align*}
x_m &=\frac{1-x_d}{m+1-x_d},\\
x_k & =\frac{m}{m+1-x_d}x_{k-1},\quad k=m+1,\ldots,d-1.
\end{align*}
Hence for $k=m+1,\ldots,d-1$
$$x_k=\frac{(1-x_d)m^{k-m}}{(m+1-x_d)^{k-m+1}}.$$
For $x_d$ we get that
$$\frac{m}{1-x_d}\frac{(1-x_d)m^{d-1-m}}{(m+1-x_d)^{d-1-m+1}}-x_d=0,$$
which is equivalent to
$$\left(\frac{m}{m+1-x_d}\right)^{d-m}=x_d.$$
This equation has a unique root $x_d=\rho_d$ in $(0,2m/d)$, which results in the existence of a unique solution $x_k=\rho_k$, $k=m,\ldots,d-1$.
Note that the system~\eqref{eq:system} is equivalent (by summing all rows) to the system
\begin{equation}
\left\{
\begin{array}{cccc}
1-\left(\frac{m}{1-x_d}\right)x_m &=&x_m,&\\
\frac{m}{1-x_d}(x_{k-1}-x_k) &= &x_k,&\quad k=m+1,\ldots,d-1,\\
1 &=&x_m+\ldots+x_d.&
\end{array}
\right.
\label{eq:sys}
\end{equation}
Let check the conditions of Theorem~\ref{th:law}. For non-zero partial derivatives of functions $f_k,$ $k=m,\ldots,d$, we would get:

\begin{equation}
\left\{
\begin{array}{cccc}
\frac{\partial f_m}{\partial x_m}(x_m,\ldots,x_d) &= & -\frac{m}{1-x_d}-1, &  \\
\frac{\partial f_m}{\partial x_d}(x_m,\ldots,x_d) &= & -\frac{m}{(1-x_d)^2}x_m, &  \\
\frac{\partial f_k}{\partial x_{k-1}}(x_m,\ldots,x_d)&= & \frac{m}{1-x_d}, &\quad k=m+1,\ldots,d-1, \\
\frac{\partial f_k}{\partial x_{k}}(x_m,\ldots,x_d)&= & -\frac{m}{1-x_d}-1, &\quad k=m+1,\ldots,d-1, \\
\frac{\partial f_k}{\partial x_{d}}(x_m,\ldots,x_d)&= & \frac{m}{(1-x_d)^2}(x_{k-1}-x_k), & \quad k=m+1,\ldots,d-1, \\
\frac{\partial f_d}{\partial x_{d-1}}(x_m,\ldots,x_d)&= & \frac{m}{1-x_d}, &  \\
\frac{\partial f_d}{\partial x_{d}}(x_m,\ldots,x_d)&= & -1+\frac{m}{(1-x_d)^2}x_{d-1}. & 
\end{array}
\right.
\label{eq:func_der}
\end{equation}
Hence, the characteristic polynomial of the derivative matrix is
\begin{align*}
P(\lambda) & =(-1)^{d-m}\left(\frac{m}{1-x_d}\right)^{d-m}\left(-\frac{m}{(1-x_d)^2}x_m\right)\\
&\quad+\sum_{k=m+1}^{d-1}(-1)^{d-k}\left(\frac{m}{1-x_d}\right)^{d-k}\left(-\frac{m}{1-x_d}-1-\lambda\right)^{k-m}\frac{m}{(1-x_d)^2}(x_{k-1}-x_k)\\
&\quad+\left(-\frac{m}{1-x_d}-1-\lambda\right)^{d-m}\left(-1+\frac{m}{(1-x_d)^2}x_{d-1}-\lambda\right)\\
&=(-1)^{d-m}\sum_{k=m+1}^{d}\left(\frac{m}{1-x_d}\right)^{d-k}\left(\frac{m}{1-x_d}+1+\lambda\right)^{k-m}\frac{m}{(1-x_d)^2}x_{k-1}\\
&\quad-(-1)^{d-m}\sum_{k=m}^{d-1}\left(\frac{m}{1-x_d}\right)^{d-k}\left(\frac{m}{1-x_d}+1+\lambda\right)^{k-m}\frac{m}{(1-x_d)^2}x_{k}\\
&\quad-(-1)^{d-m}\left(\frac{m}{1-x_d}+1+\lambda\right)^{d-m}\left(1+\lambda\right)\\
&=(-1)^{d-m}\sum_{k=m}^{d-1}\left(\frac{m}{1-x_d}\right)^{d-k-1}\left(\frac{m}{1-x_d}+1+\lambda\right)^{k-m}\frac{m}{(1-x_d)^2}x_{k}(1+\lambda)\\
&\quad-(-1)^{d-m}\left(\frac{m}{1-x_d}+1+\lambda\right)^{d-m}\left(1+\lambda\right).
\end{align*}
Let us denote $t=1+\lambda$, $c=\frac{m}{1-x_d}$. Then 
$$P(\lambda)=(-1)^{d-m+1}tQ(t),$$
where
$$Q(t):=(c+t)^{d-m}-\sum_{k=m}^{d-1}c^{d-k}(c+t)^{k-m}\frac{x_k}{1-x_d}.$$
For $Q(t)$ we get
\begin{align*}
Q(t) & =\sum_{i=0}^{d-m}{d-m\choose i}c^{d-m-i}t^{i}-\sum_{k=m}^{d-1}c^{d-k}\sum_{i=0}^{k-m}{k-m\choose i}c^{k-m-i}t^{i}\frac{x_k}{1-x_d}\\
&=\sum_{i=0}^{d-m}{d-m\choose i}c^{d-m-i}t^{i}-\sum_{i=0}^{d-m-1}c^{d-m-i}t^i\sum_{k=m+i}^{d-1}{k-m\choose i}\frac{x_k}{1-x_d}\\
&=t^{d-m}+\sum_{i=0}^{d-m-1}c^{d-m-i}t^{i}\left({d-m\choose i}-\sum_{k=m+i}^{d-1}{k-m\choose i}\frac{x_k}{1-x_d}\right).
\end{align*}

Note that
$${d-m\choose i}-\sum_{k=m+i}^{d-1}{k-m\choose i}\frac{x_k}{1-x_d}\geq {d-m-1\choose i}\left(1-\frac{\sum_{k=m+i}^{d-1} x_k}{1-x_d}\right).$$
Therefore, if $\sum_{k=m}^{d}x_k\leq 1$ (in particular, when $x_i=\rho_i$), $Q(t)$ has non-negative coefficients and, therefore, does not have roots with positive real parts. Note that $P(-1)=0$.
As result we get that the largest real part of eigenvalues of the derivative matrix equals $-1$ if $\sum_{k=m}^{d}x_k\leq 1$. Note that $\sum_{k=m}^{d}X_k(n)=1$. Therefore the process $Z(n)$ satisfies the conditions ${\sf A1,A2}$ of Theorem~\ref{th:law} on the set 
$$U=\left\{x_{m}+\ldots+x_{d}=1,x_k\geq 0,k=m,\ldots,d,x_{d}\leq \frac{2m}{d}\right\}.$$
To check condition ${\sf A3}$ we first recall that $R_{n+1}=0$. At each step we draw $m$ edges, so we change degrees of exactly $m$ vertices, while adding one new vertex. Hence, $|N_k(n+1)-N_k(n)|\leq m+1$ and $|X_k(n+1)-X_k(n)|\leq \frac{m+1}{n}$. Therefore, for $E_{n+1}$ we get
\begin{align*}
|E_{n+1}|
&\leq (n+1)\left(|Z(n+1)-Z(n)|+|\E(Z(n+1)-Z(n)|\F_n)|\right)\\
&\leq 2\frac{(n+1)(m+1)(d-m+1)}{n},
\end{align*}
which results in condition $A3$.
By Theorem~\ref{th:law}, we get statement of Lemma~\ref{lem:conv_rate}.
\end{proof}

\section{Probability to have degree less than $d$}
\label{sec:4}

We will need the following variant of the Chernoff bound.
\begin{lemma}
\label{lem:bern_sum}
Let $X_i$, $i\geq k$, $k>1$, be independent Bernoulli random variables with $\E X_i=\frac{p}{i}$ for some $p>0$. Let $S_n=\sum_{i=k}^{n}X_i$, $n>k$. Then for any $\delta>0$
\begin{align*}
\mathbb{P} (S_n \leq (1-\delta)p(\ln (n+1) - \ln (k))) &\leq \left(\frac{n+1}{k}\right)^{\frac{-\delta^2p}{2}},\\
\mathbb{P} (S_n \geq (1+\delta)p(\ln n - \ln (k-1)))   &\leq \left(\frac{n}{k-1}\right)^{\frac{-\delta^2p}{2+\delta}}.
\end{align*}
\end{lemma}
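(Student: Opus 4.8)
The plan is to apply the exponential moment (Chernoff) method separately to each tail, exploiting the independence of the $X_i$ to factor the moment generating function. The two bounds are entirely parallel, so I would describe the lower tail in detail and indicate the changes for the upper one.

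For the lower tail, fix $t>0$ and apply Markov's inequality to $e^{-tS_n}$:
$$\Pr(S_n\le a)\le e^{ta}\,\E e^{-tS_n}=e^{ta}\prod_{i=k}^n\left(1-\tfrac{p}{i}(1-e^{-t})\right).$$
Using $1-x\le e^{-x}$ on each factor turns the product into $\exp\!\big(-p(1-e^{-t})\sum_{i=k}^n \tfrac1i\big)$. The key quantitative input is the integral comparison $\sum_{i=k}^n \tfrac1i\ge\int_k^{n+1}\frac{dx}{x}=\ln(n+1)-\ln k=:L$, valid because $1/x$ is decreasing. Substituting $a=(1-\delta)pL$ gives a bound of the form $\exp\!\big(pL[t(1-\delta)-(1-e^{-t})]\big)$, which I would minimise over $t$ at $t=-\ln(1-\delta)$, producing the exponent $pL\big[-(1-\delta)\ln(1-\delta)-\delta\big]$.

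For the upper tail I would instead bound $e^{tS_n}$, use $1+x\le e^x$ to get $\E e^{tS_n}\le\exp\!\big(p(e^t-1)\sum_{i=k}^n\tfrac1i\big)$, and now use the opposite integral comparison $\sum_{i=k}^n\tfrac1i\le\int_{k-1}^n\frac{dx}{x}=\ln n-\ln(k-1)=:M$ (here the hypothesis $k>1$ ensures $k-1\ge1$, so the integral is finite). Optimising the resulting exponent $pM[(e^t-1)-t(1+\delta)]$ at $t=\ln(1+\delta)$ yields $pM[\delta-(1+\delta)\ln(1+\delta)]$.

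Finally, to reach the clean exponents I would invoke the two standard calculus inequalities $(1-\delta)\ln(1-\delta)+\delta\ge \delta^2/2$ (so the lower-tail exponent is $\le -\tfrac{\delta^2 p}{2}L$) and $(1+\delta)\ln(1+\delta)-\delta\ge \delta^2/(2+\delta)$ (so the upper-tail exponent is $\le -\tfrac{\delta^2 p}{2+\delta}M$); exponentiating then gives exactly the stated powers of $(n+1)/k$ and $n/(k-1)$. I expect no serious obstacle, as the argument is routine, but two points need care: choosing the Riemann-sum comparison on the correct side so that precisely the asymmetric endpoints $\ln(n+1)-\ln k$ and $\ln n-\ln(k-1)$ appear, and verifying the two elementary inequalities (each follows by checking that both sides and their first derivatives vanish at $\delta=0$ while comparing second derivatives). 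For $\delta\ge1$ the lower-tail statement is vacuous since $S_n\ge0$, so it suffices to treat $\delta\in(0,1)$ there.
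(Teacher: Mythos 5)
Your proof is correct, and it is exactly the right argument: the paper itself states Lemma~\ref{lem:bern_sum} without proof, presenting it as a known ``variant of the Chernoff bound,'' and your exponential-moment derivation --- factoring the MGF by independence, bounding factors via $1\pm x\le e^{\pm x}$, comparing $\sum_{i=k}^n 1/i$ with $\int_k^{n+1}dx/x$ from below and $\int_{k-1}^n dx/x$ from above (the asymmetric endpoints in the statement come precisely from this), optimising $t$, and finishing with the two standard inequalities $(1-\delta)\ln(1-\delta)+\delta\ge\delta^2/2$ and $(1+\delta)\ln(1+\delta)-\delta\ge\delta^2/(2+\delta)$ --- is the standard proof that the paper implicitly relies on. One tiny imprecision: at $\delta=1$ the lower-tail event is $\{S_n=0\}$, which is not empty, but the claimed bound still holds there directly, since $\Pr(S_n=0)=\prod_{i=k}^n(1-p/i)\le e^{-p(\ln(n+1)-\ln k)}\le\left(\frac{n+1}{k}\right)^{-p/2}$; for $\delta>1$ the event is indeed impossible.
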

Let us consider the evolution of the degree of a given vertex. Fix a time $s$ and consider the vertex $s$ that appears at this time. It appears with the degree $m$. If its degree at time $t\geq s$ is less then $d$ the probability to draw an edge to it  (from the vertex $t+1$) equals to $\frac{m}{t-N_d(t)}$. Let $X_i$, $i\geq s$, be independent Bernoulli random variables with $\E X_i=\frac{m}{i}$. Then, due to Lemma~\ref{lem:bern_sum}, the probability that the vertex $s$ has degree less than $d$ at time $n > s$ does not exceed 
$$\mathbb{P}\left(\sum_{i=s}^{n-1}X_{i}\leq d-m-1\right)\leq  c\left(\frac{n}{s}\right)^{-m/2}$$
for some positive constant $c$.
By repeating this estimate to vertices that appear at the beginning of our graph process, we would get the following result.

\begin{lemma}
\label{lem:starting_degrees}
For any fixed $s$, with high probability (hereinafter we write `whp' for brevity, i.e. with probability tending to $1$ as $n\to\infty$) the degree of $s$ in $G_n$ equals to $d$. In particular, for any fixed $n_0$ and $a$ whp degrees of all vertices in $a$-neighbourhood of first $n_0$ vertices have degrees equal to $d$.
\end{lemma}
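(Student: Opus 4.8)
The first assertion is immediate from the tail bound displayed just before the statement: for a fixed $s$ the probability that $s$ still has degree less than $d$ in $G_n$ is at most $c(n/s)^{-m/2}$, which tends to $0$ as $n\to\infty$ since $m\ge 1$. It will be convenient to record the coupling underlying that bound, as it is the only tool I need. Writing $Y_t=\1[\text{$s$ receives an edge at step }t+1]$, the degree of $s$ at time $n$ equals $m+\sum_{t=s}^{n-1}Y_t$; on the event $\{\deg_n(s)<d\}$ we have $\deg_t(s)<d$ for every $t<n$, so $\E(Y_t\mid\F_t)=\frac{m}{t-N_d(t)}\ge \frac mt$ at each such step (using only $N_d(t)\ge 0$). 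Coupling with independent $X_t\sim\mathrm{Ber}(m/t)$ so that $X_t\le Y_t$ while $s$ is not yet saturated, on $\{\deg_n(s)<d\}$ we get $\sum_{t=s}^{n-1}X_t\le d-m-1$, and Lemma~\ref{lem:bern_sum} (with $p=m$, $k=s$) gives the bound.

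For the second assertion the difficulty is that the (closed) $a$-neighborhood $N_a([n_0])$ is computed in the final graph $G_n$ and could a priori contain vertices of arbitrarily large index, which certainly do not have degree $d$. The plan is to show that whp this neighborhood is in fact confined to an initial block $[s_a]$ of bounded, $n$-independent index. The structural fact I rely on is a freezing property: once a vertex $y$ attains degree $d$, at some time $T_y$, it receives no further edges, so every neighbor of $y$ has index at most $T_y$ (earlier neighbors were chosen at the birth of $y$, later ones attached while $y$ was unsaturated). Consequently, if every vertex of a block $[s_j]$ is full by a later constant time $s_{j+1}$, then the $1$-neighborhood of $[s_j]$ lies in $[s_{j+1}]$.

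Now I iterate. Put $s_0=n_0$. Given $s_j$, the tail bound yields, for any later time $T$,
$$\Pr\big(\exists\, y\in[s_j]:\ T_y>T\big)\le \sum_{y=1}^{s_j}c\Big(\tfrac{T}{y}\Big)^{-m/2}\le c\,s_j^{\,1+m/2}\,T^{-m/2}\xrightarrow[T\to\infty]{}0,$$
so I may fix a constant $s_{j+1}>s_j$ for which this probability is at most $\varepsilon/(a+1)$. On the intersection $B$ of these events for $j=0,\dots,a-1$, a straightforward induction using the freezing property gives $N_j([n_0])\subseteq[s_j]$ for all $j\le a$, whence $N_a([n_0])\subseteq[s_a]$; and $\Pr(B)\ge 1-\varepsilon$. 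Crucially $B$ is measurable with respect to the process up to the fixed time $s_a$, so $\Pr(B)$ does not decay with $n$. Finally, by the first assertion and a union bound over the constantly many vertices of $[s_a]$, whp all of them have degree $d$ in $G_n$; intersecting with $B$, every vertex of $N_a([n_0])$ then has degree $d$. Since $\varepsilon>0$ was arbitrary, the claim follows.

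I expect the only genuinely delicate point to be this uniform-in-$n$ control of the neighborhood: one must argue that the freezing of saturated vertices confines $N_a([n_0])$ to a fixed initial block whose defining event depends only on a bounded initial segment of the process, so the estimate is stable as $n\to\infty$ rather than deteriorating as new vertices are appended. Everything else reduces to the already-established tail bound and to elementary union bounds over constantly many vertices.
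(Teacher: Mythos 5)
Your proof is correct and takes essentially the same route as the paper: the displayed tail bound (via the coupling with independent Bernoulli variables and Lemma~\ref{lem:bern_sum}) applied, with a union bound, to the constantly many vertices of a bounded initial segment. The paper compresses everything after the tail bound into the single sentence ``by repeating this estimate\ldots''; your freezing/iteration argument confining the $a$-neighborhood of $[n_0]$ in $G_n$ to a fixed block $[s_a]$, via an event measurable with respect to a bounded initial segment of the process, is precisely the detail needed to make that sentence rigorous.
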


\section{Number of cycles: upper bound}
\label{sec:5}

Let us estimate the probability that a new cycle of length $r$ would be formed at time $n+1$. To form a cycle of length $r$ we have to connect a new vertex with two vertices joined by a path of length $r-2$ that are open to attachment (there are $n-X_d(n)$ such vertices). Since degrees of vertices do not exceed $d$, there are at most $d^{r-2}(n-X_d(n))$ ordered pairs of vertices that are open to attachment and joined by an $(r-2)$-path. Recall that the number of vertices of degree $d$ does not exceed $\frac{2m}{d}n$.
Hence probability to form a new cycle does not exceed $$\frac{m(m-1)d^{r-2}}{n-X_d(n)}\leq\frac{m(m-1)d^{r-2}}{\left(1-\frac{2m}{d}\right)n}.$$
Let $n> m(m-1)d^{r-1}$. Connecting in this way two vertices that are joined by an $(r-2)$-path could create at most $d^{r-3}$ new cycles, and at each step, we draw edges to $m(m-1)/2$ pairs of vertices. Therefore at each step we could create at most $d^{r-3}m(m-1)/2$ new cycles. As result, the number of $r$-cycles in $G_n$ is stochastically dominated by  $(d^{r-3}m(m-1)/2)\sum_{i=m(m-1)d^{r-1}+1}^{n}X_{i-1} + C_{r},$ where $X_i$ are independent Bernoulli random variables with parameters $p/i$, $p=\frac{m(m-1)d^{r-2}}{1-\frac{2m}{d}}$ and the constant $C_{r}$ equals to the maximal possible number of $r$-cycles on first $m(m-1)d^{r-1}+1$ vertices. Due to Lemma~\ref{lem:bern_sum} there are  constants $C,c>0$, such that
$$\mathbb{P}\left((d^{r-3}m(m-1)/2)\sum_{i=m(m-1)d^{r-1}+1}^{n}X_i>C\ln n\right)\leq cn^{-2}.$$
Hence, due to the Borel--Cantelli lemma, probability that there are more then $C\ln n$ cycles in $G_n$ for some $n>N$ tends to $0$ as $N\to\infty$, i.e. we proved the following result.
\begin{lemma}
\label{lem:number_of_cycle_upper_bound}
For any $r>2$, the number of cycles of length $r$ in $G_n$ is $O(\ln n)$ a.s.
\end{lemma}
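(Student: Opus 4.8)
The plan is to control the number of length-$r$ cycles by a first-moment estimate at the level of a single step of the process, and then to pass from the per-step bounds to an almost-sure statement via a stochastic domination by a sum of independent Bernoulli variables, followed by the Chernoff bound of Lemma~\ref{lem:bern_sum} and the Borel--Cantelli lemma.

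First I would bound the conditional probability, given $\F_n$, that the step from $G_n$ to $G_{n+1}$ creates a new cycle of length $r$. Such a cycle must consist of the newly added vertex together with an ordered pair of distinct old vertices $u,w$ that are still open to attachment and are joined by a path with $r-2$ edges, where the new vertex sends one of its $m$ edges to $u$ and another to $w$. Using that every vertex has degree at most $d$, the number of walks of length $r-2$ emanating from a fixed open vertex is at most $d^{r-2}$, so the number of such ordered pairs is at most $d^{r-2}(n-N_d(n))$. Since each of the $m$ edges of the new vertex is drawn uniformly among the $n-N_d(n)$ open vertices, the probability that a fixed pair is covered is of order $m(m-1)/(n-N_d(n))^2$; combining this with the a priori bound $N_d(n)\le \frac{2m}{d}n$ gives a conditional probability of creating a cycle of at most $\frac{m(m-1)d^{r-2}}{(1-2m/d)n}=:p/n$.

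Next I would convert the count of new cycles per step into an indicator. A single open pair joined by an $(r-2)$-path can close at most $d^{r-3}$ distinct $r$-cycles, and the new vertex produces $m(m-1)/2$ candidate pairs, so each step adds at most $d^{r-3}m(m-1)/2$ cycles. Hence the number of length-$r$ cycles in $G_n$ is bounded by $(d^{r-3}m(m-1)/2)\sum_i I_i + C_r$, where $I_i$ is the indicator that step $i$ created at least one new cycle and $C_r$ accounts for the finitely many cycles present on the first $m(m-1)d^{r-1}+1$ vertices. Because $\Pr(I_{i}=1\mid\F_{i-1})\le p/(i-1)$ uniformly, the sum $\sum_i I_i$ is stochastically dominated by $\sum_i X_{i-1}$ with the $X_j$ independent Bernoulli of mean $p/j$; the second tail bound of Lemma~\ref{lem:bern_sum}, with $\delta$ taken large enough that $\delta^2 p/(2+\delta)\ge 2$, then yields $\Pr\big((d^{r-3}m(m-1)/2)\sum_i X_i>C\ln n\big)\le c\,n^{-2}$ for suitable constants $C,c>0$. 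Summability of $n^{-2}$ and the Borel--Cantelli lemma upgrade this to an almost-sure bound, giving the claimed $O(\ln n)$.

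The main obstacle I anticipate is the stochastic domination step, i.e. justifying the coupling of the dependent indicators $I_i$ by independent Bernoulli variables. The subtlety is that the attachment probabilities at step $i$ depend on $N_d(i-1)$, and hence on the whole history, so one cannot literally treat the $I_i$ as independent; the point is that the conditional success probability is bounded by the deterministic quantity $p/(i-1)$ for every realisation, which is exactly what is needed to build the coupling, and this is precisely the role played by the uniform bound $N_d(n)\le \frac{2m}{d}n$. Everything else — the walk counting, the degree-$d$ cap, and the invocation of the Chernoff bound — is routine once this domination is in place.
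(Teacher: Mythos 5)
Your proposal is correct and takes essentially the same route as the paper's own proof: the same per-step conditional bound $\frac{m(m-1)d^{r-2}}{(1-2m/d)n}$ obtained by counting open pairs joined by $(r-2)$-paths, the same cap $d^{r-3}m(m-1)/2$ on cycles created in one step, the same stochastic domination by sums of independent Bernoulli variables with means $p/i$, and the same combination of Lemma~\ref{lem:bern_sum} with the Borel--Cantelli lemma. Your explicit justification of the domination step --- that the conditional success probability is bounded by the deterministic quantity $p/i$ uniformly over the history, thanks to $N_d(n)\le\frac{2m}{d}n$ --- simply makes precise what the paper asserts without comment.
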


Note that w.h.p. there are at most $C\ln n$ vertices in the $a$-neighbourhood of the union of all $r$-cycles. Therefore probability to draw an edge to this neighbourhood at time $n$ does not exceed $\frac{C\ln n}{n}$ (for some constant $C$), and to draw two edges does not exceed $\frac{C\ln^2 n}{n^2}$.
Therefore, by the Borel--Cantelli lemma, we get the following result.
\begin{lemma}
\label{lem:complex_graphs}
For any $\epsilon>0$ and $\ell$ there is $s$ such that with probability at least $1-\epsilon$ in $[n]\backslash [s]$ there are no connected subgraphs with at most $\ell$ vertices and at least 2 cycles.
\end{lemma}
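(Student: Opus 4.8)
The plan is to identify, for each putative ``complex'' subgraph, the unique step at which it is completed, and to show that a completion after time $s$ is a summable-probability event. Fix $a=\ell$ and let $W_t$ denote the set of vertices lying within graph-distance $\ell$ (in $G_t$) of some cycle of length at most $\ell$ in $G_t$. Applying Lemma~\ref{lem:number_of_cycle_upper_bound} to each cycle length $3\le r\le\ell$ and summing, together with the Chernoff estimate used in its proof (which gives probability $O(t^{-2})$ of having more than $C\ln t$ cycles), I would record that there are constants $C,c>0$ with $\Pr(|W_t|>C\ln t)\le c\,t^{-2}$, since in a graph of maximum degree $d$ each relevant cycle contributes at most $O(1)$ vertices to $W_t$.

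Next I would run a minimality argument. Suppose the induced graph on $[n]\setminus[s]$ contains a connected subgraph $H$ with at most $\ell$ vertices and at least two independent cycles, and among all such subgraphs choose one minimizing $\max V(H)=:t+1$. Write $v=t+1$ for this largest (hence last-appearing) vertex and $j=\deg_H(v)$. A short computation with the cyclomatic number $\mu(\cdot)=|E|-|V|+(\text{number of components})$ gives $\mu(H-v)=\mu(H)+c'-j$, where $c'\ge 1$ is the number of components of $H-v$. Minimality forces no component of $H-v$ to be complex, so $\mu(H-v)\le c'$, whence $j\ge\mu(H)\ge 2$: the completing vertex draws at least two of its $m$ edges into $V(H)\setminus\{v\}$.

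I would then split into two cases according to whether $H-v$ is acyclic. If $H-v$ contains a cycle, that cycle has length at most $\ell$, lies in $G_t$, and — since $H$ has diameter less than $\ell$ — all of $H$, and in particular the at least two neighbours of $v$ in $H$, lies in $W_t$; hence $v$ draws at least two edges into $W_t$, an event of conditional probability at most $\binom{m}{2}(|W_t|/(t-N_d(t)))^2=O(|W_t|^2/t^2)$. If instead $H-v$ is a forest, then $v$ creates the at least two cycles from scratch, so at least two of its edges are ``redundant'' (each landing on one of at most $d^{\ell}$ vertices already within distance $\ell$ of another neighbour of $v$), an event of conditional probability $O(t^{-2})$, exactly as in the single-cycle estimate opening this section. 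Combining with the tail bound on $|W_t|$, the unconditional probability that step $t+1$ completes a complex subgraph is $O(t^{-2})+O(\ln^2 t/t^2)$.

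Finally I would sum: $\sum_t (O(t^{-2})+O(\ln^2 t/t^2))<\infty$, so by the Borel--Cantelli lemma only finitely many steps complete a complex subgraph almost surely. Thus $\Pr(\exists\,t\ge s\text{ completing such a subgraph})\to 0$ as $s\to\infty$, and choosing $s$ so that this probability is below $\epsilon$ yields the claim. The main obstacle is the bookkeeping in the middle step: making the minimality argument watertight (in particular handling a possibly disconnected $H-v$, so that $c'$ enters the cyclomatic count correctly) and verifying that the two edges of the completing vertex genuinely fall inside $W_t$ in the first case, so that the summable bound $\ln^2 t/t^2$ rather than the non-summable $\ln t/t$ governs the estimate.
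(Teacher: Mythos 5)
Your overall strategy is the same as the paper's: bound the probability that step $t+1$ completes a ``complex'' subgraph by $O(\ln^2 t/t^2)$, using the tail bound from the proof of Lemma~\ref{lem:number_of_cycle_upper_bound} to control the size of the cycle neighbourhood $W_t$, then sum and invoke Borel--Cantelli. (The paper's own proof is a three-line version of this; your cyclomatic-number bookkeeping even covers the simultaneous-creation case where $H-v$ is a forest, which the paper's wording glosses over.) However, your Case 1 contains a genuine error. You claim that if $H-v$ contains a cycle then, since $H$ has diameter less than $\ell$, all of $H$ --- in particular at least two neighbours of $v$ --- lies in $W_t$. This fails when $H-v$ is disconnected: $W_t$ is defined via distances in $G_t$, whereas the short path in $H$ from a neighbour of $v$ to the cycle may pass through $v$ itself, which is absent from $G_t$. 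Concretely, for $m\geq 3$ let $H-v$ consist of two components, a unicyclic $K_1$ and a tree $K_2$, with $v$ sending one edge to $K_1$ and two edges to $K_2$; then $\mu(H)=1+3-2=2$, so $H$ is complex and can perfectly well be the minimal one, yet only one neighbour of $v$ need lie in $W_t$ and only one edge is redundant in your Case 2 sense. This configuration triggers neither of your two events, so your Borel--Cantelli sum does not control it, and its per-step probability (of order $\ln t/t^2$) is not negligible compared to your bounds --- it must be counted.

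The gap is repairable inside your framework by doing the bookkeeping per component of $H-v$. If component $K_i$ receives $j_i\geq 1$ edges from $v$, then $\mu(H)=\sum_i\left(\mu(K_i)+j_i-1\right)$; every edge into a cyclic component lands in $W_t$ (all vertices of such a component are within distance $\ell$ of its own cycle \emph{inside that component}, hence in $G_t$), and every edge into an already-hit component beyond the first is redundant. Hence $(\text{edges into } W_t)+(\text{redundant edges})\geq\mu(H)\geq 2$, and each of the three resulting events --- two edges into $W_t$, two redundant edges, or one of each --- has conditional probability $O(\ln^2 t/t^2)$, restoring summability. With this correction your argument is complete and is, if anything, more careful than the paper's own proof, which only accounts for the event of drawing two edges into the cycle neighbourhood.
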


\section{Number of rooted trees}
\label{sec:6}

For a rooted tree $T$, let $N_T(n)$ be the number of vertices that are roots of maximal subtrees of $G_n$ (a subtree is {\it maximal} in $G_n$ if all its non-leaf vertices are adjacent only to vertices of that tree) isomorphic to $T$. Note that the set of all isomorphism classes of rooted trees with degrees at most $d$ of a given depth is finite. We would refer to a maximal subtree of $G_n$ isomorphic to a tree $T$ from that set as {\it having the type $T$} (i.e. when we talk about the type of a tree in $G_n$ we assume it is rooted and maximal). Also, we call a tree $T$ {\it max-admissible}, if with positive probability its isomorphic copy is a maximal subtree of $G_n$ for large enough $n$. 
In the current section, we prove the following statement:
\begin{lemma}
\label{lem:trees}
For any max-admissible tree $T$ there is a constant $\rho_T\in(0,1)$, such that for any $\delta>0$
$$N_T(n)=\rho_T n +o(n^{1/2+\delta})\quad\text{a.s.}$$
In particular, for all $s\in\N$ and any admissible tree $T$ w.h.p. there are at least $s$ vertices in $G_n$ that are roots of maximal trees isomorphic to $T$. 
\end{lemma}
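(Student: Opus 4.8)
The plan is to prove, for each max-admissible tree $T$, that the density $X_T(n):=N_T(n)/n$ obeys a scalar recursion to which Theorem~\ref{th:law} applies, and to do so by induction on the ``complexity'' of $T$. The structural observation driving everything is that the maximal subtree rooted at a fixed vertex can only \emph{grow}: adding the new vertex together with its $m$ edges may enlarge such a subtree, or raise the degree of one of its leaves, but it can never make it smaller. Hence, ordering the (finitely many, at each bounded depth and maximal degree $d$) types by a complexity parameter $c(T)$ that strictly increases whenever a vertex or an incident edge is added, a type-$T$ subtree is \emph{created} only out of types $T'$ with $c(T')<c(T)$, while it is only \emph{destroyed} into types of strictly larger complexity. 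This triangular structure is exactly what allows an induction on $c(T)$: when treating $T$ I may assume the conclusion of the lemma for all $T'$ with $c(T')<c(T)$.

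First I would compute $\E(N_T(n+1)-N_T(n)\mid\F_n)$. Since the probability that two of the $m$ new edges land near the same bounded subtree is $O(n^{-2})$, to leading order the conditional expectation splits as $c_T(n)-\beta_T(n)N_T(n)$. Here $\beta_T(n)$ is the attachment rate destroying a fixed type-$T$ subtree; $n\beta_T(n)$ converges to the total limiting attachment rate of such a tree, an explicit rational function of the degree densities evaluated at the limits $\rho_k$ from Lemma~\ref{lem:conv_rate} (through the attachment probability $m/(n-N_d(n))$). The creation term $c_T(n)$ is a nonnegative combination of the densities $X_{T'}(n)$ of strictly smaller types, plus a contribution from the incoming vertex itself forming a fresh type-$T$ subtree; by the inductive hypothesis and Lemma~\ref{lem:conv_rate} it converges to a constant $c_T^{*}$. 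Passing from $N_T$ to $X_T$ produces, exactly as in the proof of Lemma~\ref{lem:conv_rate}, the extra normalizing term, so that
$$\E\left(X_T(n+1)-X_T(n)\mid\F_n\right)=\frac{1}{n+1}\left(c_T(n)-\bigl(1+n\beta_T(n)\bigr)X_T(n)\right).$$

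Setting $F(x)=c_T^{*}-b_T^{*}x$ with $b_T^{*}:=1+\lim_n n\beta_T(n)\ge 1$, taking $E_{n+1}=(n+1)\bigl(X_T(n+1)-\E(X_T(n+1)\mid\F_n)\bigr)$, and absorbing the fluctuations of $c_T(n)$ and $n\beta_T(n)$ around their limits into $R_{n+1}$, I would check the hypotheses of Theorem~\ref{th:law}. Condition ${\sf A1}$ holds with root $\theta=\rho_T:=c_T^{*}/b_T^{*}$ and $H=-b_T^{*}$; crucially $L=b_T^{*}\ge 1>1/2$, the stability margin being supplied for free by the normalizing $+1$. Condition ${\sf A2}$ is immediate since $F(x)(x-\theta)=-b_T^{*}(x-\theta)^2<0$. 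For ${\sf A3}$, the increment $|N_T(n+1)-N_T(n)|$ is bounded by a constant depending only on $d$ and $|T|$, so $E_{n+1}$ is bounded and $\sup_n\E(|E_{n+1}|^2\mid\F_n)<\infty$, while Lemma~\ref{lem:conv_rate} and the inductive hypothesis give $|R_{n+1}|=o(n^{-1/2+\delta})=O(n^{\delta})$. Theorem~\ref{th:law}, applied with rate parameter $1/2-\delta$, then yields $X_T(n)\to\rho_T$ with $|X_T(n)-\rho_T|=o(n^{-1/2+\delta})$, i.e. the claimed estimate for $N_T(n)$. To see $\rho_T\in(0,1)$, note $\rho_T\le 1$ trivially; positivity holds in the base case because the minimal types are created directly by the incoming vertex at a strictly positive limiting rate, and in the inductive step because max-admissibility of $T$ forces some smaller max-admissible $T'$ (with $\rho_{T'}>0$) to appear with positive coefficient in $c_T^{*}$. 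The final assertion follows at once: $N_T(n)=\rho_T n+o(n^{1/2+\delta})\to\infty$ a.s., so a.s., and in particular whp, $N_T(n)\ge s$ for all large $n$.

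I expect the principal difficulty to lie entirely in the first step: correctly enumerating the creation and destruction transitions for $N_T$, and in particular choosing the complexity measure $c(T)$ so that the induced generator is genuinely triangular (every creation strictly lowers $c$) and so that the finitely many $T'$ feeding into $T$ are themselves max-admissible, which is what makes the induction well-founded. Once the recursion is brought into the displayed affine form, the stochastic-approximation input is routine, the decisive and pleasantly automatic point being that the normalization contributes the summand $-X_T(n)$, forcing $L\ge 1>1/2$ irrespective of the detailed combinatorics.
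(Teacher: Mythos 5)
Your proposal is correct in substance and reaches the same conclusion, but it organizes the stochastic-approximation step differently from the paper. The paper fixes a depth $b$, collects the densities of \emph{all} max-admissible types of depth at most $b$ into a single vector $W_b(n)=(Z_1(n),\ldots,Z_b(n))$, and applies Theorem~\ref{th:law} once to that vector: each $F_i$ depends only on $z_1,\ldots,z_i$ and, within depth $i>1$, the matrix $A_i$ is lower-triangular with diagonal entries at most $-1$, so the Jacobian is block lower-triangular with top eigenvalue $-1$ (the $i=1$ block being the degree system of Section~\ref{sec:3}); uniqueness and positivity of the root are then proved by induction on depth. You instead decouple the system: you apply Theorem~\ref{th:law} one scalar coordinate at a time, inducting over types, and turn the coupled drift into the affine form $c_T^{*}-b_T^{*}x$ by substituting the already-established limits of the smaller types and absorbing their fluctuations into $R_{n+1}$. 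Note that this genuinely uses the \emph{rate} part of the inductive hypothesis ($o(n^{-1/2+\delta})$, from Lemma~\ref{lem:conv_rate} and earlier stages), not mere convergence, and you correctly carry that rate through the induction. What your route buys is that no matrix spectral analysis is needed beyond the quoted degree lemma: each scalar application only requires $b_T^{*}\geq 1>1/2$, supplied for free by the normalization. The triangularity that the paper encodes in the Jacobian, you encode in the well-ordering of types; the two are the same combinatorial fact.

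Two details need to be nailed down, both fixable. First, the ordering: a complexity measure that merely increases when vertices are added is not sufficient as stated, because the within-depth transition rates for a depth-$b$ type involve densities of types of depth at most $b-2$ (the trees hanging at the other endpoints of the edges from $n+1$), and a bushy shallow max-admissible tree can have more vertices than a thin deep one. Either order by depth first and size second (which is what the paper's block structure amounts to), or observe that once those rates are written in terms of the truncated types --- which are precisely the subtrees of $T$ rooted at the children of the new vertex, and are max-admissible because a truncation of a maximal tree is again a maximal tree --- every density entering the drift of $X_T$ is that of a proper subtree of $T$, and induction on the number of vertices becomes legitimate. Second, besides the $O(n^{-2})$ double-edge events you mention, the drift carries an $O(\ln n/n)$ correction from edges drawn into neighborhoods of existing cycles (a vertex can also cease to be the root of any tree when a cycle enters its $b$-ball); this is controlled by Lemma~\ref{lem:number_of_cycle_upper_bound} and is harmlessly absorbed into $R_{n+1}$, exactly as in the paper, but it has to be accounted for.
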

\begin{proof}
Let us fix $b\in\N$ and consider variables $X_T(n):=N_T(n)/n$ and vector $Z_b(n):=(X_{{T_i}}(n))$ over all max-admissible rooted trees $T_i$ of depth $b$ (there are only finitely many such trees). Note that the case $b=1$ refer to the number of stars and was already considered in Section~\ref{sec:3}. Let $b>1$. The order of the elements of $Z_b(n)$ (or, in other words, the order on the set of all max-admissible trees of depth $b$) is defined in a way such that an addition of new branches (that preserves the depth of the tree) increases the order. It could be done by induction on $b$, say, in the following way. If $T_1,T_2$ are stars (i.e. $b=1$), then $T_1\prec T_2$ if and only if $T_1$ has less leaves than $T_2$. Assume that $\prec$ on the set of all max-admissible trees of depth $b-1$ is defined. Let $s_1,s_2$ be the number of children of roots of trees $T_1,T_2$ of depth $b$ respectively. If $s_1<s_2$, then $T_1\prec T_2$. If $s_1=s_2=:s$, then let $T_j^1,\ldots,T_j^s$ be the subtrees of $T_j$ rooted at the children $v_j^1,\ldots,v_j^s$ of the root of $T_j$ comprising all descendants of these children and ordered in the decreasing order. Then $T_1\prec T_2$ if and only if $(T_1^1,\ldots,T_1^s)\prec_s(T_2^1,\ldots,T_2^s)$, where $\prec_s$ is the lexicographical order on the set of $s$-vectors of trees of depth $b-1$ induced by the order $\prec$. 

Note that
$$\E(X_T(n+1)-X_T(n)|\F_n)=\frac{1}{n+1}\left(\E(N_T(n+1)-N_T(n)|\F_n)-X_T(n)\right).$$
There are two ways to change $N_T(n)$ at time $n+1$. We could draw an edge to a maximum tree isomorphic to $T$ or we could create a new copy of $T$ rooted at $n+1$. Recall that due to equation \eqref{eq:add_vertex} for each given vertex of degree less than $d$ probability to draw an edge to it is 
$$\frac{m}{n-N_d(n) }=\frac{1}{n}\frac{m}{1-\frac{N_d(n)}{n}}.$$
In a rooted tree $T$, fix a non-leaf vertex $u$. Then the expected number (conditioned on $G_n$) of trees $T'$ in $G_{n}$ of type $T$ such that an edge is drawn from $n+1$ to a vertex $u'$ of $T'$ and there exists an isomorphism of rooted trees $T\to T'$ sending $u$ to $u'$ equals
$$C\frac{m\frac{N_T(n)}{n}}{1-\frac{N_d(n)}{n}}=C\frac{mX_T(n)}{1-X_d(n)},$$
where the constant $C=C(T,u)$ corresponds to the number of vertices that belong to the orbit of $u$ under the action of the automorphism group. For example, if we consider a rooted tree of depth $3$ with all inner vertices of degree $k$, then for the root $C=1$, for vertices at distance $1$ from the root $C=k$ and for vertices at distance $2$ from the root $C=k(k-1)$.
Recall that $X_d(n)\leq\frac{2m}{d}$, and hence, due to condition $2m<d$, $X_d(n)$ is bounded away from $1$.

The type of the maximal tree with root $n+1$ would correspond to the probability distribution induced by the numbers of maximal trees of depth $b-1$ at time $n$. It is defined by the types of trees of depth $b-1$, to which roots we draw $m$ edges from vertex $n+1$. Note that (conditional) probability to draw edges to trees that share a non-leaf vertex is $O(\frac{1}{n})$ a.s. It is also possible to draw an edge to a vertex from a neighbourhood of a cycle, but, due to Lemma~\ref{lem:number_of_cycle_upper_bound}, the probability to do so is of order $O(\frac{\ln n}{n})$ and does not affect our argument. Hence, drawing a given edge to the root of a given tree does not impact (up to $O(\frac{\ln n}{n})$ error term) probabilities to draw other edges to roots of other trees. Therefore, probability to create a tree of type $T$ in the vertex $n+1$ is polynomial of $\frac{X_{T_{i}}(n)}{1-X_d(n)}$ up to $O(\frac{\ln n}{n})$ error term, where $T_{i}$ are max-admissible trees of depth $b-1$. To change a type of a given maximal tree in $G_n$ (to another given type) of depth $b$ we need to draw an edge to one of its vertices and draw the rest of the edges to the roots of trees (such that every tree does not have a non-leaf vertex that belongs to a different tree --- probability of drawing edges to "intersecting" trees is of order $\frac{1}{n}$ and counted in the error term $O(\frac{\ln n}{n})$) of depth at most $b-2$ of given types (that depends on the type of a tree we want to obtain). Such probability is polynomial of $\frac{1}{1-X_d(n)}$ and $X_{T_{i}}(n)$ up to a term $O(\frac{\ln n}{n})$, where $T_i$ are max-admissible trees of depth $b-2$.

Therefore
$$\E(Z_b(n+1)-Z_b(n)|\F_n)=\frac{1}{n+1}\left(A_bZ_b(n)-Z_b(n)+Y_b+O\left(\frac{\ln n}{n}\right)\right)$$
where $A_b=A_b(Z_1(n),\ldots,Z_{b-2}(n))$ is a lower-triangular matrix with negative elements on the diagonal and non-negative under the diagonal and $Y_b=Y_b(Z_{b-1}(n),X_d(n))$ is a vector, such that the elements of both $A_b$ and $Y_b$ are polynomials of $\frac{1}{1-X_d(n)}$ and $X_{T_{i}}(n)$, where $T_{i}$ are trees of depth at most $b-2$ (for $A_b$) or exactly $b-1$ (for $Y_b$). Let us consider $F_b(Z_1,\ldots,Z_b):=A_bZ_b(n)-Z_b(n)+Y_b$ (note that $A_b$ and $Y_b$ are functions of $Z_1,\ldots,Z_{b-1}$ itself). Recall that $Z_1$ contains $X_d$, so $F_b$ is deterministic. We would use induction over $b$ to prove that there is a unique solution of the system $F_{i}(z_1,\ldots,z_i)=0, i=1,\ldots,b$ (in an appropriate area). We already established the existence of the unique (non-zero) root for the case $b=1$. Assume there are unique non-zero solutions $z_1^{\ast},\ldots,z_{b-1}^{\ast}$ of the systems $F_{i}(z_1,\ldots,z_i)=0, i=1,\ldots,b-1$. If we define $H_b(z_b)=F_b(z_1^{\ast},\ldots,z_{b-1}^{\ast},z_b)$, then $H_b(z_b)=0$ is a system of linear equations with the unique root $z_b^{\ast}$ since $A_b$ is lower-triangular with negative elements on the diagonal. Now let us show that all components of $z_b^{\ast}$ are positive. Recall that all elements under the diagonal of $A_b$ are non-negative and each (except first) row has at least one positive element outside the diagonal (if a tree is not the smallest possible, we could remove one vertex with its children from it to make it smaller). All components of $Y_b(z_{b-1}^{\ast},\rho_d)$ are non-negative as well. Hence it is enough to show that the first element of $Y_b$ is positive. It follows from the fact that the smallest max-admissible tree of depth $b$ (which corresponds to the first coordinate of $z_b$) could be obtained by drawing edges from a new vertex to the smallest max-admissible trees of depth $b-1$ and the first coordinate of $z^*_{b-1}$ is positive by the induction hypothesis.

Let us consider the vector $W_b(n)=(Z_1(n),\ldots,Z_{b}(n))$. We get that
$$\E(W_b(n+1)-W_b(n)|\F_n)=\frac{1}{n+1}\left((F_1,\ldots,F_b)+O\left(\frac{\ln n}{n}\right)\right).$$
The derivative matrix of function $(F_1,\ldots,F_b)(z_1,\ldots,z_b)$ is of following form. Around the diagonal, it has clusters of derivatives of $F_i$ with respect to $z_i$, which are lower-triangular (since $F_i= A_iz_i-z_i+Y_i$) for $i>1$ with diagonal elements at most $-1$. The cluster for $i=1$ was studied in Section~\ref{sec:3} (and has characteristic polynomial $P(\lambda)$ with biggest root $-1$). Since $F_i$ depends only on $z_1,\ldots,z_i$, all elements above diagonal clusters are $0$. Therefore the highest eigenvalue of the derivative matrix of $(F_1,\ldots,F_b)$ is $-1$ (for all possible values of the process). Hence $W_b(n)$ satisfies condition ${\sf A2}$ of Theorem~\ref{th:law}. Since functions $(F_1,\ldots,F_b)$ have second-order derivatives, condition ${\sf A1}$ is satisfied as well. To check condition ${\sf A3}$ note that if we take 
$$E_{n+1}=(n+1)(W_b(n+1)-\E(W_b(n+1)|\F_n)),$$
then 
\begin{align*}
R_{n+1} :&= (n+1)(W_b(n+1)-W_b(n))-(F_1,\ldots,F_b)-E_{n+1}\\
&=(n+1)\E(W_b(n+1)-W_b(n)|\F_n) - (F_1,\ldots,F_b) = O\left(\frac{\ln n}{n}\right)\quad\text{a.s.}
\end{align*}
and
$$|E_{n+1}|\leq (n+1)|W_b(n+1)-W_b(n)|+(n+1)|\E(W_b(n+1)-W_b(n)|\F_n)|\leq C$$
for some constant $C$ since the number of trees of depth $b$ that could be impacted by the vertex $n+1$ is bounded from above by a constant, which results in condition ${\sf A3}$.
Therefore, due to Theorem~\ref{th:law}  $W_b(n)$ converges a.s. to $(z_1^{\ast},\ldots,z_{b}^{\ast})$ with the rate $o(n^{-1/2+\delta})$ for any $\delta>0$ a.s.
\end{proof}

\section{Number of cycles: lower bound}
\label{sec:7}

By Lemma~\ref{lem:trees}, recall that for any max-admissible rooted tree $T$ of depth $r-1$ there exists $\rho_T>0$, such that
$$N_T(n)=\rho_T n +o(n^{2/3})\quad\text{a.s.}$$
Let $\T_{r-1}$ be the set of all max-admissible trees of depth $r-1$, such that its root and at least one vertex at distance $r-2$ from the root have degrees less than $d$. Note that this set is not empty. The probability to draw a cycle of length $r$ at step $n+1$ (subject to $G_n$) is at least (each pair of root and non-root vertices could be counted at most twice)
$$\sum_{T\in\T_{r-1}}N_T(n)\frac{m(m-1)}{2(n-X_{d}(n))^2}\geq \frac{1}{n}\sum_{T\in\T}\rho_T $$   
for all $n>N$ with probability tending to $1$ as $N\to\infty$. Therefore, the increase in the number of cycles at step $n+1$ is stochastically dominated from below by Bernoulli random variable with parameter $p_n=p/n$ for some $p>0$, and these variables are independent for different steps.
For any $n_0$ due to Lemma~\ref{lem:starting_degrees} w.h.p. all vertices in the $r$-neighbourhood of $[n_0]$ have degrees equal to $d$, and,  hence, w.h.p. if a cycle arises at step $n+1$, then it entirely belongs to $[n+1]\setminus[n_0]$. Due to Lemma~\ref{lem:bern_sum} and the domination by  Bernoulli random variables, there are constants $c,C,\delta>0$, that the number of $r$-cycles in $[n]\setminus[n_0]$ exceeds $c\ln n$ with probability at least $1-Cn^{-\delta}$. Therefore, we get the following result.

\begin{lemma}
\label{lem:cycles}
For any $s,r$ and $n_0$ w.h.p. there are at least $s$ cycles of length $r$ that are entirely in $[n]\backslash [n_0]$.
\end{lemma}

\section{Number of unicyclic graphs}
\label{sec:8}

Let us recall that a graph $U$ is {\it unicyclic} if it is connected and contains exactly one cycle. In other words, a unicyclic graph comprises a cycle (of length $\ell$) with disjoint trees growing from this cycle (we assume that all trees have the same depth $k$;  $\ell$ and $k$ are fixed for the rest of the section).  Let $U$ be a max-admissible unicyclic (maximality and max-admissibility in the case of unicyclic graphs are defined exactly in the same way as for trees) graph. We say that a maximal unicyclic subgraph of $G_n$ {\it has type $U$} if it is isomorphic to $U$. We have one specific type $U_0$ of unicyclic graphs with all non-leaf vertices having degree $d$. Let us call such unicyclic graphs {\it complete}. As above, $N_U(n)$ is the number of maximal subgraphs in $G_n$ isomorphic to $U$. Let us consider the vector $Z(n)=(N_{U_i}(n))_{i=1,\ldots,K}$, where $U_i$ are all non-complete unicycle graphs of {\it depth} $k$ (i.e. the depth of trees growing from the cycle) comprising an $\ell$-cycle, ordered from the smallest to the largest (the linear order on unicyclic graphs could be defined in the same way as on rooted trees), and $K=K(k,\ell)$ is the number of unicyclic graphs of such kind. Process $Z(n)$ takes values in $\Z_+^{K}$. Note that the complete unicyclic graph $U_0$ could only be obtain by adding a leaf (since the degree of a new vertex equals $m$) to a unique non-leaf vertex of $U_K$ with degree less than $d$. 
In this section, we prove that $Z(n)$ has a limit probability distribution.
\begin{lemma}
\label{lem:unicyc}
For any $i_1,\ldots,i_K$ there exists a constant $c=c(i_1,\ldots,i_K)$ such that
$$\mathbb{P}(N_{U_{1}}(n)=i_1,\ldots,N_{U_{K}}(n)=i_K)\to c$$
as $n\to\infty$, and $\sum_{i_1,\ldots,i_K\in\mathbb{Z}_+}c(i_1,\ldots,i_k)=1$. Moreover for any $n_0$
$$\mathbb{P}(N_{U_0}>n_0)\to 1$$
as $n\to\infty$.
\end{lemma}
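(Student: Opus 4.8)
The plan is to realise $Z(n)=(N_{U_1}(n),\dots,N_{U_K}(n))$ as a Markov chain whose one-step transition kernel converges as $n\to\infty$, and then to identify its limit distribution with the stationary distribution of an explicit continuous-time Markov chain on $\Z_+^K$. First I would fix $\epsilon>0$ and invoke Lemma~\ref{lem:complex_graphs} to choose $s$ so that, with probability at least $1-\epsilon$, every connected subgraph on a bounded number of vertices inside $[n]\setminus[s]$ carries at most one cycle; on this event every cyclic structure we track is genuinely unicyclic, and distinct unicyclic subgraphs evolve independently up to the $O(\ln n/n)$ collision terms already controlled in Section~\ref{sec:6}. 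By Lemmas~\ref{lem:conv_rate} and~\ref{lem:trees}, the degree density $X_d(n)$ and every tree-type density $N_T(n)/n$ converge a.s.\ to the constants $\rho_d$ and $\rho_T$. Consequently, conditioned on $\F_n$, the probability that step $n+1$ (i) gives birth to a new unicyclic graph of type $U_i$, (ii) turns an existing copy of $U_i$ into a copy of $U_j$ with $j>i$, or (iii) completes a copy of $U_i$ into $U_0$, is of the form $\frac{1}{n}\bigl(q(Z(n))+o(1)\bigr)$, where the limiting rate $q$ is a polynomial in $\rho_d,\rho_T$ depending linearly on the coordinates of $Z(n)$.

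Passing to the time scale $\tau=\ln n$ turns these $\Theta(1/n)$ per-step probabilities into $\Theta(1)$ rates, so the rescaled process converges to a time-homogeneous continuous-time Markov chain $\tilde Z$ on $\Z_+^K$ with constant birth rates $\beta_i$, growth rates $\gamma_{ij}$ ($j>i$), and absorption rates into the complete type $U_0$. The order on unicyclic graphs defined in Section~\ref{sec:8} guarantees that every type-changing edge strictly increases the type and that $U_0$ is absorbing; hence each individual non-complete graph performs a monotone march $U_i\to U_{i+1}\to\cdots\to U_0$ and then leaves the system, while new graphs immigrate at the total rate $\sum_i\beta_i$. The lower bound of Lemma~\ref{lem:cycles} gives $\sum_i\beta_i>0$, and Lemma~\ref{lem:number_of_cycle_upper_bound} keeps all rates finite.

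The chain $\tilde Z$ is therefore an immigration--death process. Writing $h_i>0$ for the mean time for a single graph started at type $U_i$ to reach $U_0$ (finite, since the per-graph motion is a monotone finite chain with positive rates), the generator $\mathcal L$ satisfies $\mathcal L V(z)=\sum_i\beta_i h_i-\sum_i z_i$ for $V(z)=\sum_i h_i z_i$. Since $\sum_i z_i\geq V(z)/\max_i h_i$, this is a geometric Foster--Lyapunov drift $\mathcal L V\leq b-\alpha V$, so $\tilde Z$ is positive recurrent and geometrically ergodic with a unique stationary distribution $c(i_1,\dots,i_K)$ of total mass $1$. It then remains to transfer this to $Z(n)$: because $\tilde Z$ forgets its initial state on an $O(1)$ scale in $\tau$ --- slow compared with the a.s.\ convergence of the rates and the $o(1)$ kernel errors --- a coupling of $Z(n)$ with $\tilde Z$ lets us interchange the limits and conclude $\Pr\bigl(Z(n)=(i_1,\dots,i_K)\bigr)\to c(i_1,\dots,i_K)$ with $\sum c=1$.

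For the final claim $\Pr(N_{U_0}>n_0)\to1$, note that graphs immigrate at rate $\sim\beta/n$ with $\beta>0$, so the expected number born during $[s,\sqrt n]$ is $\sim\beta\ln\sqrt n\to\infty$, and by the recurrence estimate each such graph is absorbed into $U_0$ by time $n$ (an $\tau$-window of length $\tfrac12\ln n\to\infty$) with probability tending to $1$; hence $N_{U_0}(n)$ exceeds any fixed $n_0$ whp. The main obstacle is precisely the interchange-of-limits step: making rigorous that the randomly time-varying, self-interacting kernel of $Z(n)$ relaxes to the stationary law of $\tilde Z$ requires the quantitative (geometric) ergodicity above together with uniform control of the a.s.\ density fluctuations and of the collision error terms, so that the slow drift of the rates does not outpace the mixing.
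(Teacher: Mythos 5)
Your overall architecture coincides with the paper's: both arguments replace $Z(n)$ by a Markov chain whose one-step transition probabilities are $\Theta(1/n)$ with limiting coefficients (constant immigration rates, per-copy linear upgrade rates, absorption into the complete type only from $U_K$), homogenize away the $1/n$ time-inhomogeneity, prove positive recurrence, and transfer back. Your two devices that differ from the paper are both legitimate, and one is arguably nicer. First, you homogenize via the log-time change $\tau=\ln n$ to get a continuous-time chain, whereas the paper passes to the embedded jump chain $S'(t)$ (these are equivalent mechanisms). Second, you prove recurrence with the single Lyapunov function $V(z)=\sum_i h_i z_i$ built from mean absorption times; your generator identity $\mathcal{L}V(z)=\sum_i\beta_i h_i-\sum_i z_i$ is correct (by first-step analysis each live copy contributes drift exactly $-1$), and it yields geometric ergodicity in one stroke, where the paper runs a coordinate-by-coordinate induction giving only positive recurrence. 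Your argument for $\Pr(N_{U_0}>n_0)\to 1$ (immigration on $[s,\sqrt n]$ plus absorption within the remaining $\tfrac12\ln n$ of log-time) is also sound and close in spirit to the paper's combination of Lemma~\ref{lem:cycles} with tightness of the non-complete counts. Two small imprecisions: upgrades can skip types, so the per-copy march is monotone but not through consecutive types; and finiteness of the $h_i$ requires the connectivity fact that from every type there is a chain of nonzero rates $c_{i_s,i_{s+1}}$ up to $U_K$ followed by $c_{K,0}\neq 0$ --- the paper proves this, you assume it.

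The genuine gap is the step you yourself flag as ``the main obstacle'': transferring stationarity of $\tilde Z$ to $\lim_n\Pr(Z(n)=(i_1,\ldots,i_K))$. As you have set it up, this step really is hard, because you recorded the one-step probabilities only as $\frac1n\bigl(q(Z(n))+o(1)\bigr)$. With relative errors that are merely $o(1)$, the per-step perturbations $o(1/n)$ are not summable, and you are forced into the quantitative adiabatic argument you sketch (perturbation bounds for slowly varying kernels, tightness at log-time $\tau-T$, mixing uniform over compact sets), none of which is carried out. The paper avoids all of this by retaining the rates supplied by Lemmas~\ref{lem:conv_rate} and~\ref{lem:trees}: the densities converge at rate $o(n^{-1/2+\delta})$ a.s., so each relevant one-step probability is $\frac{c}{n}+o(n^{-4/3})$, and simultaneous-event corrections are $O(n^{-2})$ --- summable errors. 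One then introduces the auxiliary chain $S(n)$ with the \emph{exact} probabilities $c_i/n$, $c_{j,i}S_j/n$, $c_{K,0}S_K/n$, and couples $Z$ with $S$ so that, by the Borel--Cantelli lemma, their transitions differ in only finitely many steps almost surely; hence $Z(n)$ has a limiting distribution if and only if $S(n)$ does for every initial state, and that last statement is exactly the homogenized-chain recurrence you proved. Upgrading your $o(1)$ to the available $o(n^{-1/3})$ and inserting this coupling closes the gap; without it, the proposal is incomplete at its decisive step.
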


\begin{proof}

For a fixed max-admissible unicyclic graph $U$, at time $n+1$ the value of $N_U$ may change due to the following reasons (similar to the changing of the number of rooted trees from the previous section). 
\begin{itemize}
\item A new graph may be created by drawing $2$ edges from the vertex $n+1$ to a single tree of a certain type (recall that by Lemma~\ref{lem:conv_rate}, the probability to draw an edge to a given vertex (subject to $G_n$) equals $\frac{1}{(1-\rho_d)n}+o(n^{-4/3})$ a.s.), and the rest of the edges to roots of ``disjoint'' (without common non-leaf vertices) trees of certain types. By Lemma~\ref{lem:trees}, for a max-admissible tree $T$, we have $N_T=\rho_T n+\theta_T(n) n^{2/3}$, where, for every $C$, $\max_{T:\,|V(T)|\leq C}\theta_T(n)\to 0$ a.s. Hence, in the same way as in the previous section, conditional probability of creating a unicyclic graph of type $U$ this way (given $G_n$) equals $\frac{c_{U}}{n}+o(n^{-4/3})$ a.s. for some constant $c_U\geq 0$.
\item A $U$-isomorphic graph may be created from a fixed smaller unicyclic subgraph $H$ of the type $U'$, if the vertex $n+1$ sends an edge to a non-leaf vertex of $H$ and the rest of the edges to roots of ``disjoint'' trees of certain fixed types in a way that $H$ becomes of type $U$ (i.e. the maximal subgraph comprising the same cycle and having the same depth as $H$ becomes of type $U$). Conditional probability of creating a maximal subgraph of type $U$ in this way (given $H$) equals $\frac{c_{U',U}}{n}+o(n^{-4/3})$ for some constant $c_{U',U}\geq 0$.
\end{itemize}
If $U$ has at least one non-leaf vertex of degree less than $d$, the previous procedure could reduce $N_U(n)$ by drawing an edge to a unicyclic graph of the type $U$. Once a maximal unicyclic subgraph becomes complete, it never changes its type.

Note that the conditional probability (given $G_n$) to perform more than one of such operations (maybe for different types of $U$) at the same time equals $O\left(\frac{1}{n^{2}}\right)$ a.s.
We prove the existence of a limit probability distribution for $Z(n)$ by considering an auxiliary process which is defined below.

Let us consider a Markov chain $S(n)=(S_1(n),\ldots,S_K(n))$ on $\Z_+^{K}$ (see, e.g., \cite[Chapter 6]{GS01} for more details on Markov chains and corresponding terminology) with transition probabilities (we denote $c_i:=c_{U_i}$, $c_{j,i}:=c_{U_{j},U_{i}}$ for brevity)
\begin{itemize}
\item for $i\in[K]$,
$$\mathbb{P}(S_i(n+1)=S_i(n)+1,\,S_j(n+1)=S_j(n),j\neq i)=\frac{c_{i}}{n};$$ 
\item for $1\leq j<i\leq K$, 
$$\mathbb{P}(S_i(n+1)=S_i(n)+1,\,S_j(n+1)=S_j(n)-1,\,S_k(n+1)=S_k(n),k\neq i,j)=\frac{c_{j,i}S_j}{n};$$
\item $c_{K,0}=1/(1-\rho_d)$ and
$$\mathbb{P}(S_K(n+1)-S_K(n)=-1,\,S_j(n+1)=S_j(n),j\neq K)=\frac{c_{K,0}S_K}{n};$$
\item 
$$
\mathbb{P}(\forall i\, S_{i}(n+1)=S_i(n))=1-\sum_{i=1}^K \frac{c_{i}}{n}-\sum_{1\leq j<i\leq K}\frac{c_{j,i}S_j}{n}-\frac{c_{K,0}S_K}{n}.
$$
\end{itemize}  

Since sums of error terms $o\left(n^{-4/3}\right)$ and $O\left(\frac{1}{n^{2}}\right)$ converge, such terms would not impact process $Z(n)$ after some random moment $N$, and hence the existence of the limit probability distribution for $Z(n)$ follows from its existence for $S(n)$ for any initial distribution.

Note that $c_{1}\neq 0$, $C_{K,0}\neq 0$ and from the definition of $c_{U,U'}$ and the ordering, it follows that
for any $i,j$ strictly between $1$ and $K$, we get that
\begin{itemize}
\item there are $1=i_1<\ldots<i_t=i$, such that $c_{i_{s},i_{s+1}}\neq 0$ for all $s\in[t-1]$,
\item there are $j=j_1<\ldots<j_p=K$, such that $c_{i_{s},i_{s+1}}\neq 0$ for all $s\in[p-1]$.
\end{itemize}
This implies that $S(n)$ is aperiodic and irreducible.
Note that $S(n)$ is not time-homogeneous. Let us consider a random walk $S'(t)$ on $\Z_+^{K}$ that reflects only those moves of $S(n)$ when it changes its state (i.e. for every $t$, $S'(t):=S(n_t)$, where $n_t$ is the $t$-th moment $n$ such that $S(n)\neq S(n-1)$). Since $\frac{c}{n}$, $c\neq 0$, forms a divergent series, by Borel--Cantelli lemma, all coordinates of $S$ change infinitely many times a.s., so $S'$ is well defined. Also, since the conditional probability (given $S(n-1)=x$) to change the state at time $n$ is $\frac{c}{n-1}$, where $c$ depends only on $x$, we get that the conditional probability that the state at time $n$ becomes $y$ (for a fixed $y\neq x$), subject to $S_n=x$ and the event that the state is changed, does not depend on $n$, and only depends on $x$ and $y$. Thus, $S'(t)$ is time-homogeneous and its transition probabilities are given by
\begin{itemize}
\item $\mathbb{P}(S'_i(t+1)=S'_i(t)+1,\,S'_j(t+1)=S'_j(t),j\neq i)=\frac{c_{i}}{D(S'(t))},$
\item $\mathbb{P}(S'_i(t+1)=S'_i(t)+1,\,S'_j(t+1)=S'_j(t)-1,\,S'_k(t+1)=S_k(t),k\neq i,j)=\frac{c_{j,i}S'_j(t)}{D(S'(t))},$
\item $\mathbb{P}(S'_K(t+1)-S'_K(t)=-1,\,S'_j(t+1)=S'_j(t),j\neq K)=\frac{c_{K,0}S'_K(t)}{D(S'(t))},$
\end{itemize}
where 
$$D(S'(t))=\sum_{i=1}^Kc_{i}+\sum_{1\leq j<i\leq K}c_{j,i}S'_j(t)+c_{K,0}S'_K(t).$$
Let us consider $S'_1(t)$. There are constants $c_{-}$ and $c_{+}$ such that 
\begin{align*}
\mathbb{P}(S'_1(t+1)-S'_1(t)= -1|S'(t)) & \geq c_{-}\frac{S'_1(t)}{|S'(t)|+1},\\
\mathbb{P}(S'_1(t+1)-S'_1(t)=1|S'(t))   & \leq c_{+}\frac{1}{|S'(t)|+1}.
\end{align*}
Hence, for large enough $S'_1(t)$ (i.e. with $S'_1(t)\geq N$ for some $N\in\N$), $$\E(S'_1(t+1)-S'_1(t)|S'(t),S'_1(t)>N,S'_1(t+1)\neq S'_1(t))<C<0$$
for some constant $C$. Therefore $S'_1(t)$ is positively persistent. Consider $W_i(t)=(S'_{1}(t),\ldots,S'_i(t))$, $i=1,\ldots,K$. Let us assume that $W_i(t)$ is positively persistent, and prove that the same is true for $W_{i+1}(t)$. Note that there are constants $C_1,C_2>0$, such that 
\begin{align*}
\mathbb{P}(S'_{i+1}(t+1)-S'_{i+1}(t)=1|S'(t)) &<C_1\frac{|W_{i+1}(t)|+1}{|S'(t)|+1},\\
\mathbb{P}(S'_{i+1}(t+1)-S'_{i+1}(t)=-1|S'(t))&>C_2\frac{S'_{i+1}(t)}{|S'(t)|+1}.
\end{align*}
Let $N>\frac{C_1}{C_2}$, $N\in\N$. We get
$$
\E\left(\left.S'_{i+1}(t+1)-S'_{i}(t)\right|S'(t),S'_{i+1}(t)>N|W_{i+1}(t)|,S'_{i+1}(t+1)\neq S'_{i+1}(t)\right)<C<0
$$ 
for some constant $C$. Hence, the probability $\mathbb{P}(S'_{i+1}(t+t')\leq N|W_{i+1}(t+t')|\,|S'_{i+1}(t)\leq N|W_{i+1}(t)|)$ is bounded away from $0$ as $t'\to\infty$. Since $W_i(t)$ is positively persistent, it implies that $W_{i+1}(t)=(W_i(t),S'_{i+1}(t))$ is positively persistent as well.

As result, for each state $s=(s_1,\ldots,s_K)$ probabilities $\mathbb{P}(S'(t+t')=s|S'(t)=s)$ (as $t'\to\infty$) are bounded away from $0$. Hence, the same is true for probabilities $\mathbb{P}(S(n_{t+t'})=s|S(n_t)=s)$ as $t'\to\infty$, and for $\mathbb{P}(S(t+t')=s|S(t)=s)$ as well.
Therefore, there exists limit distribution for $S(n)$ (and for $Z(n)$).

The second part of Lemma~\ref{lem:unicyc} follows from Lemma~\ref{lem:cycles} and the existence of limit distribution for $Z(n)$.

\end{proof}

\section{Convergence laws}
\label{sec:9}

Fix $R\in\N$ and set $a=3^R$. For $r\in\mathbb{N}$, let us call a unicyclic graph comprising a cycle of length at most $r$ and trees of depth exactly $r$ an {\it $r$-graph}. The cycle of a unicyclic graph is called its {\it kernel}. An $r$-graph is {\it complete} if all its vertices have degrees either 1 or $d$, and all its trees are perfect and of the same depth. 

Below we define graph properties ${\sf Q1}$ and ${\sf Q2}$ that imply the existence of a winning strategy of Duplicator. Consider some integer numbers  $n>N_0>n_0$. We say that a graph $G$ with maximum degree $d$ on $[n]$ has {\it the property} ${\sf Q1}$, if

\begin{enumerate}
\item any two cycles of length at most $a$ with vertices outside of $[n_0]$ are at distance at least $3a$ from each other;
\item any vertex outside of $[N_0]$ is at distance at least $3a$ from $[n_0]$;
\item any vertex from $[N_0]$ has degree $d$;
\item for any max-admissible tree $T$ of depth at most $a$, there are at least $R$ maximal subgraphs in $G$ isomorphic to $T$ at distance at least $a$ from $[N_0]$ and each other, and the same is true for any complete $a$-graph $U$.
\end{enumerate}

Now, assume that $n_1>n_2>N_0$, and $G^1,G^2$ are graphs on $[n_1]$ and $[n_2]$ respectively such that $G^1|_{[N_0]}=G^2|_{[N_0]}$. We say that the pair of graphs $(G^1,G^2)$ has {\it the property} ${\sf Q2}$, if for any non-complete max-admissible $a$-graph $U$, 
\begin{itemize}
\item either the numbers of maximal subgraphs in $G^i$ isomorphic to $U$ are equal for $i\in\{1,2\}$,
\item or, in both graphs, there are at least $R$ maximal copies of $U$ that are distance at least $a$ from $[N_0]$ and each other.
\end{itemize}
Note that, if $G^1,G^2$ have maximum degree $d$, the property ${\sf Q1}$, and the pair $(G^1,G^2)$ has the property ${\sf Q2}$, then, for any positive integer $\delta\leq 2^R$, the numbers of maximal subgraphs in $G^i$ isomorphic to a given non-complete max-admissible $a$-graph $U$ with all vertices at distance at least $\delta$ from $[n_0]$ are equal for $i\in\{1,2\}$.

\begin{lemma}
\label{lem:game}
If both graphs $G^1$, $G^2$ have maximum degree $d$, have the property ${\sf Q1}$, and the pair $(G^1,G^2)$ has the property ${\sf Q2}$, then Duplicator has a winning strategy in the Ehrenfeucht-Fra\"{\i}ss\'{e} game on graphs $G^1,G^2$ in $R$ rounds.
\end{lemma}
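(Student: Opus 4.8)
The plan is to invoke the Ehrenfeucht--Fra\"{\i}ss\'{e} criterion (Theorem~\ref{thm:Ehren}) and exhibit an explicit winning strategy for Duplicator in the $R$-round game. Writing $a=3^R$, I would set radii $r_k:=3^{R-k}$, so that $r_0=a$, $r_R=1$ and $r_k=3r_{k+1}$, and fix once and for all the identical core region $\mathcal{C}$ consisting of the vertices at distance less than $3a$ from $[n_0]$; by items (2)--(3) of ${\sf Q1}$ we have $\mathcal{C}\subseteq[N_0]$, all its vertices have degree $d$, and $G^1|_{\mathcal C}=G^2|_{\mathcal C}$, so $\mathcal C$ is a closed, identically-embedded subgraph of both $G^1$ and $G^2$. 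Duplicator maintains the invariant $I_k$: after $k$ rounds, with marked vertices $x_1,\dots,x_k$ in $G^1$ and $y_1,\dots,y_k$ in $G^2$, the correspondence $x_i\mapsto y_i$ extends to a graph isomorphism $\phi_k$ from $\mathcal{C}\cup B_{r_k}(\{x_1,\dots,x_k\})$ onto $\mathcal{C}\cup B_{r_k}(\{y_1,\dots,y_k\})$ which is the identity on $\mathcal C$. The factor $3$ is exactly what drives the induction: after $R$ rounds $\phi_R$ is an isomorphism of the $1$-balls, so $x_i\mapsto y_i$ preserves equality and adjacency, hence is a partial isomorphism and Duplicator wins.

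For the inductive step, assume $I_k$ and suppose Spoiler plays $x_{k+1}$ in $G^1$ (the symmetric case is identical). Let $S^1_k:=\mathcal C\cup\{x_1,\dots,x_k\}$ and split on the distance of $x_{k+1}$ from $S^1_k$. If $d(x_{k+1},S^1_k)\le 2r_{k+1}$, then $B_{r_{k+1}}(x_{k+1})\subseteq B_{3r_{k+1}}(S^1_k)=B_{r_k}(S^1_k)\subseteq\mathrm{dom}(\phi_k)$, so Duplicator answers $y_{k+1}:=\phi_k(x_{k+1})$ and lets $\phi_{k+1}$ be the restriction of $\phi_k$; since the smaller ball sits inside the domain of $\phi_k$, the invariant is preserved for free. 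If instead $d(x_{k+1},S^1_k)>2r_{k+1}$, then $B_{r_{k+1}}(x_{k+1})$ is disjoint from $B_{r_{k+1}}(S^1_k)$, and the task is to produce $y_{k+1}$ in $G^2$, again at distance more than $2r_{k+1}$ from $S^2_k$, whose rooted $r_{k+1}$-ball is isomorphic to that of $x_{k+1}$; gluing the two disjoint isomorphisms then yields $\phi_{k+1}$ and restores $I_{k+1}$.

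Producing this $y_{k+1}$ is the heart of the argument and is where ${\sf Q1}$ and ${\sf Q2}$ enter. Using the separation conditions of ${\sf Q1}$ together with Lemmas~\ref{lem:number_of_cycle_upper_bound} and~\ref{lem:complex_graphs}, I would first argue that $B_{r_{k+1}}(x_{k+1})$ lies either entirely inside $\mathcal C$ or entirely inside the ``far'' region, where it is a rooted tree of depth at most $a$ or a unicyclic graph with a single cycle; it cannot straddle, since vertices outside $[N_0]$ are at distance at least $3a$ from $[n_0]$ and, by ${\sf Q1}$(1), distinct short cycles are at distance at least $3a$ from one another (here the generous constants in ${\sf Q1}$ are used precisely to guarantee this clean dichotomy). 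In the core case Duplicator answers with the same vertex. In the tree case the rooted ball is a truncation of some max-admissible tree $T$, and item (4) of ${\sf Q1}$ supplies at least $R$ pairwise-far copies of $T$ in $G^2$; as fewer than $R$ vertices are marked, an unused copy far from $S^2_k$ exists. The unicyclic case is routed through the ambient $a$-graph $U$: if $U$ is complete, item (4) again furnishes $R$ far copies, while if $U$ is non-complete, property ${\sf Q2}$ (sharpened by the remark after it to an \emph{exact} equality of counts of copies at distance at least $\delta\le 2^R$ from $[n_0]$) gives either an abundance of at least $R$ copies on both sides or a bijection between the copies in $G^1$ and $G^2$, and Duplicator picks the position corresponding to $x_{k+1}$ in a matching copy.

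The step I expect to be the main obstacle is this last one: the consistency of the matching across rounds in the equal-count subcase, where Duplicator is not free to choose a copy but must respect a fixed bijection, and one must check that the forced copy is genuinely far from all previous $y_i$ and compatible with $\phi_k$. The observation that makes it work is again ${\sf Q1}$(1): any two cyclic $a$-graphs lie at distance at least $3a$ in each graph, so distinct copies are automatically non-interfering. Consequently a later play inside an already-matched copy is within $2r$ of the earlier marked vertex and is absorbed by Case~A, whereas a play inside a fresh copy is answered in its bijection-partner, which, being far from every other copy, is far from $S^2_k$. With $y_{k+1}$ produced in every case, $I_{k+1}$ holds; iterating to $k=R$ and invoking Theorem~\ref{thm:Ehren} proves Lemma~\ref{lem:game}.
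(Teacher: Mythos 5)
Your strategy records too little, and the place where this bites is exactly the step you flag as the main obstacle. Your invariant remembers only the rooted $r_k$-balls around the marked vertices (plus the core), while the paper's invariant is stated for the sets $B_{2^{R-j+1}}(X_j\cup\{x_j\})$, where $X_j$ is the \emph{entire kernel} (cycle) of any non-complete unicyclic structure near $x_j$, and its equivalence $\equiv_j$ explicitly requires the matching isomorphisms to preserve kernels. This extra bookkeeping is not cosmetic: it is what locks in a single isomorphism between two scarce unicyclic copies the moment one of them is touched, so that all later plays in the same copy can be answered consistently. Your substitute for this --- ``a later play inside an already-matched copy is within $2r$ of the earlier marked vertex and is absorbed by Case~A'' --- is false: a non-complete $a$-graph has a cycle of length up to $a=3^R$ and trees of depth $a$ hanging from it, so its diameter is of order $a$, whereas Case~A at round $k+1$ reaches only to distance $2r_{k+1}\le 2\cdot 3^{R-2}$ from previous marks. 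Concretely, let $U$ be a non-complete $a$-graph type with a short kernel (say a triangle) having exactly one maximal copy in each graph (this is the equal-count branch of ${\sf Q2}$). Spoiler plays $x_1$ at distance slightly more than $2r_2$ from the kernel of $U_1\subset G^1$; in round $2$ he plays $x_2$ on that kernel. Then $d(x_1,x_2)>2r_2$, so this is your Case~B, and $B_{r_2}(x_2)$ contains the whole non-complete kernel, so $y_2$ must be found near a non-complete cycle of the same local type in $G^2$; but the only such cycle lies in the unique copy $U_2$, which is \emph{not} ``far from $S^2_1$'' (it may contain or neighbour $y_1$, depending on how round $1$ was answered). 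Your Case~B recipe therefore produces no answer at all; the only way out is to reuse a copy isomorphism $U_1\to U_2$ fixed at the first contact, verify that separation of the $x$'s transfers to separation of the $y$'s, and keep all \emph{other} responses (e.g.\ the fresh tree responses from ${\sf Q1}$(4)) away from the kernels of scarce copies --- this last point is what the distance conditions in the paper's Case~3 bullets enforce, and nothing in your proposal plays that role.

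There is a second, independent error in Case~A. You claim $B_{r_{k+1}}(x_{k+1})\subseteq B_{r_k}(S^1_k)\subseteq\mathrm{dom}(\phi_k)$, but the second inclusion fails: $S^1_k$ contains $\mathcal{C}$, while $\mathrm{dom}(\phi_k)=\mathcal{C}\cup B_{r_k}(\{x_1,\dots,x_k\})$ does not contain $B_{r_k}(\mathcal{C})$. Hence a play at distance between $1$ and $2r_{k+1}$ from $\mathcal{C}$ but far from all marks is covered by neither of your cases: it need not lie in $\mathrm{dom}(\phi_k)$; it can lie outside $[N_0]$, so answering with the identical vertex is impossible; and the copies supplied by ${\sf Q1}$(4) and ${\sf Q2}$ are at distance at least $a$ from $[N_0]$, so none of them can match a ball that sends edges into $\mathcal{C}$. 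The paper avoids this by keying its first case to the distance from $x_i$ to $[n_0]$ (then the whole relevant ball lies inside $[N_0]$, where $G^1$ and $G^2$ literally coincide, so Duplicator answers with the same vertex) and by building the extension-to-$[N_0]$ requirement into the definition of $\equiv_i$; your proof needs some analogous mechanism for the annulus around the core. Finally, a smaller point: Lemmas~\ref{lem:number_of_cycle_upper_bound} and~\ref{lem:complex_graphs} are statements about the random process and cannot be invoked inside Lemma~\ref{lem:game}, which is a deterministic statement about an arbitrary pair of graphs satisfying ${\sf Q1}$ and ${\sf Q2}$; whatever local structure you need (e.g.\ that balls far from the core meet at most one short cycle) must be extracted from ${\sf Q1}$ itself, as the paper does via ${\sf Q1}$(1).
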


\begin{proof}
Let us define the winning strategy of Duplicator. For a vertex $v$ and $r\in\N$, let $B_r(v)$ be the  $r$-neighbourhood of $v$ (i.e., the closed ball in the graph metric of radius $r$ and the center at $v$). In the same way, for a set of vertices $U$, $B_r(U)=\cup_{v\in U}B_r(v)$ is the $r$-neighbourhood of $U$. Note that we omit a reference to a graph in the notation for these balls --- each time we use the notation, the host graph would be clear from the context. For every round $i\in[R]$, we denote by $x_1,\ldots,x_i$ and $y_1,\ldots,y_i$ the vertices chosen in graphs where Spoiler and Duplicator made the $i$-th move respectively (say, $G^1$ and $G^2$ respectively). For $x_j$ and $y_j$, $j\in[i]$, let us denote by $X_j$ and $Y_j$ the unions of sets of vertices of all kernels of non-complete $2^R$-graphs in $G^1$ and $G^2$ respectively such that these kernels are completely outside $[n_0]$, and are at distance at most $2^{R-j+1}$ from $x_j$ and $y_j$ respectively. Since $G^1,G^2$ have property ${\sf Q1}$, each of the sets $X_j,Y_j$ comprises at most 1 cycle. For a set $A\subset V(G^1)$ and a set $B\subset V(G^2)$, we say that they are {\it $i$-equivalent}, and write $A\equiv_i B$, if the following conditions are fulfilled:
\begin{itemize}
\item the sets of $j\in[i]$ such that the respective vertex $x_j$ ($y_j$) belongs to $A$ ($B$) are equal, 
\item there exists an isomorphism $\varphi:G^1|_A\to G^2|_B$ of the induced subgraphs on $A$ and $B$ that maps $x_j$ to $y_j$ for all $j$ such that $x_j\in A$ and preserves (in both directions) all kernels that are outside $[n_0]$ of non-complete $2^R$-graphs,
\item if $x_i$ is at distance at most $2^{R-i+1}$ from $[n_0]$, then $\varphi$ can be extended to an isomorphism of $G^1|_{A\cup[N_0]}$ and $G^2|_{B\cup[N_0]}$ that maps every vertex of $[N_0]$ to itself.
\end{itemize}

We define the strategy by induction on the number of rounds that have been just played. Fix $i\in[R]$ and assume that, in round $i$, Spoiler makes a move in $G_1$ (without loss of generality --- if the move was done in $G_2$, then the strategy is exactly the same), and that 
 for all $j\leq i-1$, $B_{2^{R-j+1}}(X_j\cup\{x_j\})\equiv_j B_{2^{R-j+1}}(Y_j\cup \{y_j\})$.
Note that, if $i=1$, then there are no additional requirements on the graphs. We also note that, due to the assumption, the map sending $x_j$ to $y_j$, $j\in[i-1]$, is an isomorphism of $G_1|_{\{x_1,\ldots,x_{i-1}\}}$ and $G_2|_{\{y_1,\ldots,y_{i-1}\}}$. So if we succeed with the induction step, then we eventually get that Duplicator wins the game.\\


\begin{enumerate}

\item If $d(x_i,[n_0])\leq 2^{R-i+1}$ then Duplicator chooses $y_i=x_i$. We need to check that $B_{2^{R-i+1}}(X_i\cup\{x_i\})\equiv_j B_{2^{R-i+1}}(Y_i\cup \{y_i\})$. Due to the property ${\sf Q1}$, every cycle of length at most $a$ which is completely outside $[N_0]$ is far from $x_i=y_i$, and so $X_i=Y_i=\varnothing$. Also, the balls $B_{2^{R-i+1}}(x_i)$ and $B_{2^{R-i+1}}(y_i)$ are equal and lie entirely in $[N_0]$. If there is $j<i$ such that $x_j\in B_{2^{R-i+1}}(x_i)$, then, by the induction hypothesis, $B_{2^{R-j+1}}(x_j)\equiv_j B_{2^{R-j+1}}(y_j)$, implying that $B_{2^{R-j+1}}(x_j)=B_{2^{R-j+1}}(y_j)$ due to the third condition in the definition of the relation $\equiv_j$. Then $x_j=y_j$ belongs to $B_{2^{R-i+1}}(x_i)=B_{2^{R-i+1}}(y_i)$. The relation $B_{2^{R-i+1}}(X_i\cup\{x_i\})\equiv_j B_{2^{R-i+1}}(Y_i\cup \{y_i\})$ follows. If there are no such $j$, then there is also no $j$ such that $y_j\in B_{2^{R-i+1}}(y_i)$, and the relation is immediate.\\ 

\item Assume that $d(x_i,[n_0])> 2^{R-i+1}$ and that there exists $j<i$ such that $x_j\in B_{2^{R-i+1}}(X_i\cup\{x_i\})$. Let $j<i$ be the biggest such round. Then $B_{2^{R-i+1}}(X_i\cup\{x_i\})\subset B_{2^{R-j+1}}(X_j\cup\{x_j\})$, and either $X_i=X_j$ or $X_i=\varnothing$. 
 Let $\mathcal{J}$ be the set of all $j'<j$ such that $x_{j'}\in B_{2^{R-j+1}}(X_j\cup\{x_j\})$. 
  Since $B_{2^{R-j+1}}(X_j\cup\{x_j\})\equiv_j B_{2^{R-j+1}}(Y_j\cup \{y_j\})$ by the induction hypothesis, we may find an isomorphism $\varphi:G^1|_{B_{2^{R-j+1}}(X_j\cup\{x_j\})}\to G^2|_{B_{2^{R-j+1}}(Y_j\cup\{y_j\})}$ such that $\varphi(x_{j'})=y_j'$ for all $j'\in\mathcal{J}$, there are no $y_{j'}\in B_{2^{R-j+1}}(Y_j\cup\{y_j\})$ for $j'\in[j-1]\setminus\mathcal{J}$, and $\varphi(X_j)=Y_j$. Duplicator chooses $y_i=\varphi(x_i)$. It is obvious that $\varphi':=\varphi|_{B_{2^{R-i+1}}(X_i\cup\{x_i\})}$ is the desired isomorphism that insures that $B_{2^{R-i+1}}(X_i\cup\{x_i\})\equiv_i B_{2^{R-i+1}}(Y_i\cup \{y_i\})$.\\ 


\item Finally, we assume that $d(x_i,[n_0])> 2^{R-i+1}$ and there are no $j<i$ such that $x_j\in B_{2^{R-i+1}}(X_i\cup\{x_i\})$. If $X_i\neq\varnothing$, then let $U_1$ be the unique maximal $2^R$-graph with the kernel $X_i$. Due to the observation after the definition of the property ${\sf Q2}$ and due to the property ${\sf Q1}$, in the other graph there exists a maximal $2^R$-graph $U_2$ isomorphic to $U_1$ such that
\begin{itemize}
\item the kernel of $U_2$ is either at least at the same distance from $[n_0]$ as the kernel of $U_1$ from $[n_0],$ or is at distance at least $2^R$ from $[n_0]$;
\item $U_2$ is at distance at least $2a$ from the cycle of length at most $2^R$ that meets the $2^{R-j+1}$-neighbourhood of $y_j$ for every $j<i$;
\item  if, for some $j<i$, the $2^{R-j+1}$-neighbourhood of $y_j$ does not meet a cycle of length at most $2^R$, then the cycle of $U_2$ is at distance greater than $2^{R-j+1}$ from $y_j$.
\end{itemize}
Consider an isomorphism $\varphi:U_1\to U_2$, that sends the vertex $x_i$ to a vertex which is at distance more than $2^{R-i+1}$ from $[n_0]$ and set $y_i=\varphi(x_i)$.  The relation $B_{2^{R-i+1}}(X_i\cup\{x_i\})\equiv_i B_{2^{R-i+1}}(Y_i\cup \{y_i\})$ is straightforward.

Finally, if $X_i=\varnothing$, then the existence of a good choice of $y_i$ follows from the property ${\sf Q1}$.(4). Indeed, there are only two options: 1) $B_{2^{R-i+1}}$ is a maximal subtree, and then there is an isomorphic maximal subtree in the other graph which is at distance at least $a$ from the neighbourhoods of all $y_j,$ $j<i$; 2) there is a complete maximal unicyclic graph comprising a cycle of length at most $R$ which is at distance at most $2^{R-i+1}$ from $x_i$, and then there is an isomorphic maximal unicyclic subgraph in the other graph which is at distance at least $a$ from the neighbourhoods of all $y_j,$ $j<i$. The choice of $y_i$ is straightforward.



\end{enumerate}


\end{proof}

Theorem~\ref{th:main} follows from the observation after Theorem~\ref{thm:Ehren} and
\begin{lemma}
\label{lem:properties}
For any $R\in\N$ and any $\varepsilon>0$ there are $N_0>n_0$, graph families $\mathcal{A}_i$, $i\in[M]$, and numbers $p_i>0$, $i\in[M]$, $\sum_{i=1}^{M}p_i>1-\varepsilon$, such  that
\begin{itemize}
\item all graphs in $\sqcup_{i\in[M]}\mathcal{A}_i$ have the property ${\sf Q1}$;
\item if $n_1>n_2>N_0$ and graphs $G^1\supset G^2$ on $[n_1]$ and $[n_2]$ respectively belong to the same family $\mathcal{A}_i$, then the pair $(G^1,G^2)$ has the property ${\sf Q2}$;
\item for every $i\in[M]$, $\lim_{n\to\infty}\mathbb{P}(G_n\in\mathcal{A}_i)=p_i$.
\end{itemize}

\end{lemma}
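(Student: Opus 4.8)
The plan is to realise the families $\mathcal{A}_i$ as the fibres of a finite graph statistic, and to obtain property ${\sf Q1}$ with probability at least $1-\varepsilon$ by tuning two thresholds $n_0\ll N_0$. Set $a=3^R$. First I would choose $n_0$ so large that, applying Lemma~\ref{lem:complex_graphs} with $\ell=5a$, with probability at least $1-\varepsilon/4$ the graph contains, for every $n$, no connected subgraph on $[n]\setminus[n_0]$ with at most $5a$ vertices and at least two cycles; since two cycles of length at most $a$ lying within distance $3a$ of each other span such a subgraph, this gives ${\sf Q1}(1)$ simultaneously for all $n$. By Lemma~\ref{lem:starting_degrees} the set $B_{3a}([n_0])$ consists almost surely of degree-$d$ vertices, hence freezes after an almost surely finite time, so its eventual largest index $M^{\ast}$ is an almost surely finite random variable; I would then fix $N_0$ with $\Pr(M^{\ast}\le N_0)\ge 1-\varepsilon/4$, which yields ${\sf Q1}(2)$. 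For this fixed $N_0$, Lemma~\ref{lem:starting_degrees} (applied with the pair $N_0,4a$ in place of $n_0,a$) gives, with high probability, ${\sf Q1}(3)$ and moreover that every vertex of $B_{4a}([N_0])$ has degree $d$. The latter is important: an $a$-graph has diameter at most $3a$, so any non-complete $a$-graph, having a vertex of degree less than $d$, must lie entirely at distance at least $a$ from $[N_0]$.

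For ${\sf Q1}(4)$ I would feed Lemma~\ref{lem:trees} (which produces arbitrarily many maximal copies of each max-admissible tree of depth at most $a$) and Lemma~\ref{lem:cycles} together with the second assertion of Lemma~\ref{lem:unicyc} (which produces arbitrarily many complete $a$-graphs) into a packing argument: each copy has bounded size and, by the degree bound, its $a$-ball meets only a bounded number $\kappa=\kappa(a,d)$ of other copies, so the associated conflict graph has maximum degree $\kappa$ and a greedy independent set of size $R$ exists once the number of copies exceeds $(\kappa+1)R$ plus the bounded number of copies near $[N_0]$. This gives ${\sf Q1}(4)$ whp. I then define, for a graph $G$ satisfying ${\sf Q1}$, its \emph{profile} to be the pair consisting of $G|_{[N_0]}$ and the vector $(\min\{N_U(G),R\})_U$, where $U$ runs over the finitely many non-complete max-admissible $a$-graphs and $N_U(G)$ is the number of maximal copies of $U$ at distance at least $a$ from $[N_0]$ (which, by the previous paragraph, equals the total number of maximal copies of $U$). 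The families $\mathcal{A}_i$ are the finitely many nonempty fibres of this profile, intersected with ${\sf Q1}$. If $G^1,G^2$ share a profile, then $G^1|_{[N_0]}=G^2|_{[N_0]}$ (the standing hypothesis of ${\sf Q2}$), and for each $U$ either both capped counts are below $R$, forcing $N_U(G^1)=N_U(G^2)$, or both are at least $R$; since by ${\sf Q1}(1)$ distinct copies of a unicyclic $U$ surround cycles that are at distance at least $3a$ apart, these $R$ copies are automatically pairwise at distance at least $a$, so the pair has property ${\sf Q2}$.

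It remains to prove $\Pr(G_n\in\mathcal{A}_i)\to p_i$ with $\sum_i p_i>1-\varepsilon$. Summing over $i$ gives $\sum_i\Pr(G_n\in\mathcal{A}_i)=\Pr(G_n\text{ satisfies }{\sf Q1})$, and this converges to a limit $q\ge 1-\varepsilon/2>1-\varepsilon$: the indicator of ${\sf Q1}(1)$ is nonincreasing in $n$, while ${\sf Q1}(2),{\sf Q1}(3)$ stabilise and ${\sf Q1}(4)$ tends to $1$ almost surely. Hence it suffices to show that each individual limit exists, after which the families with $p_i=0$ are discarded. For this I would establish joint convergence in distribution of the profile and the ${\sf Q1}$-indicator. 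The first component $G_n|_{[N_0]}$, the events ${\sf Q1}(2),{\sf Q1}(3)$, and the saturation of $B_{4a}([N_0])$ are all measurable with respect to an almost surely finite initial segment of the process, whereas the capped count vector converges in distribution by Lemma~\ref{lem:unicyc}. Because the limiting distribution of the count chain in Lemma~\ref{lem:unicyc} is independent of its initial state, the long-run counts are asymptotically independent of this initial segment, and the joint law of (initial data, capped counts) converges.

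The one genuinely delicate point, which I expect to be the main obstacle, is that the ${\sf Q1}(1)$-indicator is a tail event correlated with the cycle structure, hence with the count vector, so it cannot be split off by a Slutsky-type argument. I would handle this by enlarging the Markov chain of Lemma~\ref{lem:unicyc}: besides the vector of non-complete $a$-graph counts, its state carries one extra bit that is switched on the first time two cycles outside $[n_0]$ come within distance $3a$ (equivalently, the first time ${\sf Q1}(1)$ fails). The per-step probability of creating such a near-collision is summable --- this is precisely what underlies the Borel--Cantelli proof of Lemma~\ref{lem:complex_graphs} --- so after the same passage to the jump chain used in Lemma~\ref{lem:unicyc} one again obtains a time-homogeneous chain admitting a limiting distribution, now jointly in the counts and in the extra bit. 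Reading off the limiting mass of each value of (counts, bit) and combining it with the asymptotically independent initial data yields the existence of every $p_i$; summation gives $\sum_i p_i=q>1-\varepsilon$, which together with Lemma~\ref{lem:game} and the observation following Theorem~\ref{thm:Ehren} completes the proof.
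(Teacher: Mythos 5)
Your construction is essentially the paper's own: the families are the fibres of the pair consisting of $G|_{[N_0]}$ and the capped count vector $(\min\{N_{U_j},R\})_j$, property ${\sf Q1}$ is assembled from Lemmas~\ref{lem:starting_degrees}, \ref{lem:complex_graphs}, \ref{lem:trees}, \ref{lem:cycles}, \ref{lem:unicyc}, and ${\sf Q2}$ is deduced from the observation that saturation of $B_{4a}([N_0])$ forces every maximal $a$-graph near $[N_0]$ to be complete. On the existence of the limits $p_i$ you are in fact more careful than the paper, which settles that point by citing Lemma~\ref{lem:unicyc} alone; your reduction to finite-time events together with the independence of the chain's limiting distribution from its initial state (equivalently, the ``extra bit'' device, justified by the summability of the per-step probability of a cycle collision) supplies the joint-convergence argument that is implicitly needed there.

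One slip needs fixing. You define $\mathcal{A}_i$ as a fibre of the profile intersected with ${\sf Q1}$ \emph{only}, but your verification of ${\sf Q2}$ uses that the number of maximal copies of $U$ at distance at least $a$ from $[N_0]$ equals the \emph{total} number of maximal copies of $U$, and this holds only on the event that every vertex of $B_{4a}([N_0])$ has degree $d$. That event is not implied by ${\sf Q1}$: property ${\sf Q1}$(3) saturates the vertices of $[N_0]$ itself, not their $4a$-neighborhood. Consequently a graph in your fibre may contain a non-complete maximal $a$-graph touching $B_a([N_0])$, and then two graphs $G^1\supset G^2$ in the same family can have equal far-counts below $R$ but different total counts, violating the first alternative of ${\sf Q2}$ --- note that the second bullet of the lemma is a deterministic statement about \emph{all} pairs in a family, so it cannot be rescued by the remark that such graphs are unlikely. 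The paper avoids this by building the condition ``all vertices at distance at most $4a$ from $[N_0]$ have degree $d$'' into the definition of $\mathcal{A}$ itself; adding this condition to your families repairs the proof at no cost, since (as you yourself note) this event is absorbing and stabilises almost surely, so it folds into your convergence argument exactly like ${\sf Q1}$(2) and ${\sf Q1}$(3).
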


\begin{proof}
Fix $R\in\N$ and $\varepsilon>0$. By Lemmas \ref{lem:starting_degrees}, \ref{lem:complex_graphs}, \ref{lem:trees}, \ref{lem:unicyc}, there exist $N>N_0>n_0$ such that with probability at least $1-\varepsilon$, for all $n\geq N$, $G_n$ has maximum degree $d$ and the property ${\sf Q1}$, and all its vertices that are at distance at most $4a$ from $[N_0]$ have degree $d$. We let $\mathcal{A}$ to be the union over all $n\geq N$ of the families of graphs $G$ on $[n]$ such that the maximum degree of $G$ equals $d$, $G$ has the property ${\sf Q1}$, and all the vertices of $G$ at distance at most $4a$ from $[N_0]$ have degree $d$. It remains to partition $\mathcal{A}=\sqcup_{i=1}^M\mathcal{A}_i$ in an appropriate way.


Let $\mathcal{M}$ be all $K$-tuples (we refer to Section~\ref{sec:8} to recall the definition of $K$) of non-negative integers that are at most $R$, and set $M:=|\mathcal{M}|M_0$, where $M_0$ is the number of all admissible maximal subgraphs of $G_N$ on $[N_0]$. For each $i\in[M]$, the respective tuple $\mathbf{m}_i\in\mathcal{M}$, and the respective admissible $H$ on $[N_0]$, let $\mathcal{A}_i\subset\mathcal{A}$ be the set of all graphs $G$ from $\mathcal{A}$ such that $G|_{[N_0]}=H$ and $\tilde Z(G)=\mathbf{m}_i$, where $\tilde Z(G)$ consists of $\tilde Z_j=\min\{R,N_{U_j}\}$. By Lemma~\ref{lem:unicyc}, for every $i\in[M]$, there exists $p_i:=\lim_{n\to\infty}\mathbb{P}(G_n\in\mathcal{A}_i)$. Note that, for every graph from $\mathcal{A}$, every its maximal unicyclic subgraph with a cycle of length at most $a$ and depth $a$ that is at distance at most $a$ from $[N_0]$, is complete (since all the vertices at distance at most $4a$ from $[N_0]$ have degree $d$). The property ${\sf Q2}$ follows.


\end{proof}



%
%



\section{Acknowledgments}

The part of the study made by Y.A. Malyshkin was done in Moscow Institute of Physics and Technology and was funded by RFBR, project number 19-31-60021.


\end{document}